%% file: main.tex
\documentclass[
  letterpaper,
  DIV=11,
  numbers=noendperiod
]{scrartcl}

\input{preamble}
\input{macros}
\input{theorems}
\usepackage{amsmath}

\DeclareMathOperator*{\minimize}{minimize}

\title{Adaptive Normalized Gradient Method for $(L_0,L_1)$-smooth
Problems: Estimating Problem Parameters as a Region}
\author[1]{Rikiya Okazaki}
\author[2]{Shotaro Yagishita}
\author[3,1]{Hideitsu Hino}

\affil[1]{Graduate Institute for Advanced Studies, SOKENDAI}
\affil[2]{Graduate School of Informatics, Kyoto University}
\affil[3]{The Institute of Statistical Mathematics}

\date{\today}
\begin{document}
\maketitle
\begin{abstract} 
We propose a parameter-free and efficient gradient descent algorithm for $(L_0,L_1)$-smooth optimization. This notion of smoothness, introduced by \citet{zhang2020gradientclippingacceleratestraining}, allows the smoothness bound to grow with the gradient norm.
Similar to cutting-plane localization, our method uses a sequence of observations to maintain a region of plausible values for $(L_0,L_1)$ and uses this region to select step sizes. 
In the convex setting, our method attains the standard iteration complexity bound $\tilde{\mathcal{O}}\left(\left(L_0/\epsilon + L_1^2\right)\|x^0-x^*\|^2\right)$ for finding an $\epsilon$-optimal solution, 
while in the nonconvex setting it finds an $\epsilon$-stationary point within $\tilde{\mathcal{O}}\left(\left(L_0/\epsilon^2 + L_1/\epsilon\right)(f(x^0)-f^*)\right)$ iterations.
To the best of our knowledge, this is the first work to establish, for a single parameter-free algorithm, the best-known iteration-complexity guarantee in the nonconvex setting while also providing the above iteration-complexity guarantee in the convex setting.
Experiments on logistic regression and real phase retrieval demonstrate that our method can exploit $(L_0,L_1)$-smoothness and perform efficiently on nonconvex problems.
\end{abstract}

\floatname{algorithm}{Algorithm}

\renewcommand*\contentsname{Table of contents}
{
\setcounter{tocdepth}{1}
\tableofcontents
}

\section{Introduction}\label{sec-introduction}
We consider the following unconstrained optimization problem:
\[
\minimize_{x \in \R^d} f(x).
\]
Recent advances in computing have made optimization in high-dimensional spaces increasingly common, particularly in neural network training.
In high-dimensional problems, computing derivatives of order two or higher can be computationally expensive, and storing them can require substantial memory. First-order methods, which use only function values and gradients, are therefore widely used, and improving their efficiency is of great importance.
Among first-order methods, gradient descent method  iteratively updates the solution in the direction of the negative gradient of the objective:
\begin{equation}\protect\phantomsection\label{eq-gd-def}{
x^{k+1} = x^k - \eta_k \nabla f(x^k),
}\end{equation}
where $\eta_k$ is the step size of the update.

This work focuses on the step-size selection problem for gradient descent.
In particular, we propose the normalized gradient method with region-based adaptation (\hyperref[algo-ngm-ra]{NGM-RA}), an efficient algorithm for optimization problems whose objective $f$ is $(L_0,L_1)$-smooth.
We define $(L_0,L_1)$-smoothness of $f$ as follows.
\begin{definition}[$(L_0,L_1)$-smoothness
\citep{zhang2020gradientclippingacceleratestraining}]\protect\hypertarget{def-l0l1sm}{}\label{def-l0l1sm}

A function $f:\R^d \to \R$ is $(L_0,L_1)$-smooth if it satisfies
\begin{equation}\protect\phantomsection\label{eq-l0l1-def}{
\forall x \in \R^d, \; \|\nabla^2 f(x)\| \leq L_0 + L_1 \|\nabla f(x)\|.
}\end{equation}

\end{definition}
Here, for a vector $v \in \R^d$, $\|v\|$ denotes its Euclidean norm, and for a matrix $A\in \R^{d \times d}$, $\|A\|$ denotes its operator norm.
This notion generalizes $L$-smoothness, defined below, since every $L$-smooth function is $(L,0)$-smooth.
\begin{definition}[$L$-smoothness]\protect\hypertarget{def-lsm}{}\label{def-lsm}

A function $f:\R^d \to \R$ is $L$-smooth if it satisfies

\[
\forall x \in \R^d, \; \|\nabla^2 f(x)\| \leq L.
\]

\end{definition}
The $(L_0,L_1)$-smoothness condition for objective functions was introduced by \citet{zhang2020gradientclippingacceleratestraining} and has attracted interest for the following reasons.
First, there exist functions that are $(L_0,L_1)$-smooth but not $L$-smooth.
Examples include univariate polynomial~\citep{zhang2020gradientclippingacceleratestraining}, power of norm and exponential functions~\citep{gorbunov2025methods}.
Moreover, \citet{zhang2020gradientclippingacceleratestraining} and \citet{crawshaw2022robustness} reported that objectives arising from modern neural network architectures such as LSTMs~\citep{gers1999lstm} and Transformers~\citep{vaswani2017attention} exhibit $(L_0,L_1)$-smoothness.
Second, even when an objective satisfies both smoothness conditions, an algorithm tailored to $(L_0,L_1)$-smoothness can exhibit better convergence behavior.
This advantage can be seen theoretically in simple examples. For instance, the logistic objective $f(x)=\log(1+\exp\{a^Tx\})$ is both $\|a\|^2$-smooth and $(0,\|a\|)$-smooth~\citep{gorbunov2025methods}.
As discussed below, replacing gradient descent with a fixed step size by normalized gradient descent improves the iteration bound for reaching an $\epsilon$-stationary point from $\mathcal{O}(\|a\|^2/\epsilon^2)$ to $\mathcal{O}(\|a\|/\epsilon)$.

\citet{zhang2020gradientclippingacceleratestraining} pointed out that clipped gradient descent (Clip.GD) and normalized gradient descent (Norm.GD) are effective for $(L_0,L_1)$-smooth problems.
Moreover, for appropriately chosen $\alpha$ and $\beta$, setting the step size $\eta_k$ in Equation~\ref{eq-gd-def} for Clip.GD and Norm.GD, respectively, to
\[
\begin{aligned}
\eta_k
&= \frac{1}{\alpha}
\min\left\{\frac{1}{L_0},
\frac{1}{\beta \|\nabla f(x^k)\|}\right\}
&&(\mathrm{Clip.GD}),\\
\eta_k
&= \frac{1}{\alpha}\frac{1}{L_0+\beta \|\nabla f(x^k)\|}
&&(\mathrm{Norm.GD}),
\end{aligned}
\]
yields an iteration complexity of
$\mathcal{O}\left((L_0/\epsilon^2+L_1/\epsilon) (f(x^0)-f^*)\right)$
for finding an $\epsilon$-stationary point~\citep{vankov2025optimizingl0l1smooth}.
The leading term
$\mathcal{O}\left(L_0(f(x^0)-f^*)/\epsilon^2\right)$
matches the iteration-complexity lower bound for $L_0$-smooth problems~\citep{carmon2019lowerbounds}.
Moreover, according to a recent result by~\citep{liang2026complexityfindingstationarypoints}, the dependence on $L_1$ is also optimal.

However, problem parameters characterizing properties of the objective function, such as $(L_0,L_1)$, 
are global quantities that can in principle be determined only by examining the entire domain $x\in\mathbb{R}^d$.
Therefore, assuming that such parameters are available to the algorithm 
is often unrealistic in practice.
Algorithms that do not require such problem parameters are referred to as parameter-free algorithms and are of practical importance.
In particular, there has been substantial recent interest in parameter-free algorithms 
that attain, up to negligible terms, the same iteration complexity as algorithms with known problem parameters.
Parameter-free algorithms also have the advantage 
that their complexity can automatically depend on the infimum of the parameter values valid only along the generated iterates, 
potentially making them more efficient than methods using some specific parameter values.

To the best of our knowledge, however, no prior work has established the above iteration-complexity guarantee for a parameter-free algorithm in the nonconvex setting.
Since many practical optimization problems, including the training of neural networks, involve nonconvex objective functions, theoretical analysis in this setting is important.
For $L$-smooth problems, parameter-free methods that adaptively estimate the smoothness parameter have been actively studied since \citet{malitsky2020adaptivegradientdescentdescent}. 
More recently, this line of approach has been extended to the nonconvex setting~\citep{lan2024projectedgradientmethodsnonconvex, yagishita2025simplelsfree}. 
In particular, these methods use $\max_k M_k$ as an estimate of the smoothness parameter for step-size selection, where $M_k$ is a lower bound on the smoothness parameter inferred at iteration $k$.
While a simple convergence analysis of this approach has been provided by \citet{yagishita2025simplelsfree}, it relies on the parameter being scalar and does not directly extend to settings with multiple coupled unknown parameters, such as $(L_0,L_1)$-smoothness.
Motivated by the above observations, the central question we address is the following.

\emph{
Can scalar parameter-free adaptation be extended to multiple coupled parameters? If so, can this extension yield a parameter-free algorithm with the best-known convergence guarantees for nonconvex $(L_0,L_1)$-smooth optimization?
}

We answer this question affirmatively by proposing \hyperref[algo-ngm-ra]{NGM-RA}, which retains the standard convergence guarantee when $f$ is convex while also achieving the best-known convergence guarantees in the nonconvex setting.
The key idea is to reinterpret $\max_k M_k$ as progressively ruling out curvature values that are inconsistent with the observations and selecting the smallest curvature value that remains plausible.
This perspective suggests maintaining a region of plausible parameter values and selecting the most aggressive parameters within that region. 
We show that this strategy can recover the same convergence guarantees as algorithms that know the parameters in advance.

The paper is organized as follows.
In the rest of Section 1, we summarize our contributions and review related work.
In Section 2, we introduce our proposed algorithm, \hyperref[algo-ngm-ra]{NGM-RA}, and describe its details.
In Section 3, we establish upper bounds on the iteration complexity of NGM-RA in both the nonconvex and convex settings and provide an overview of the proofs.
Complete proofs are deferred to the Appendix~\ref{sec-add-proof}.
In Section 4, we evaluate the effectiveness of NGM-RA through numerical experiments.
Finally, we conclude the paper and discuss future directions.

\subsection{Our Contributions}\label{our-contributions}

Our contributions are summarized as follows.

\begin{itemize}
    \item We generalize a parameter-free approach based on parameter estimation and step-size selection from a single smoothness parameter to multiple coupled smoothness parameters. This framework allows us to leverage the convergence analysis of existing parameter-dependent algorithms, leading to a simple analysis.
    \item Building on this idea, we introduce a first-order method, \hyperref[algo-ngm-ra]{NGM-RA}. We show that NGM-RA attains the best-known iteration-complexity guarantee for nonconvex $(L_0,L_1)$-smooth optimization while retaining the standard guarantee in the convex setting\footnote{\citet{bilel2024complexities} claims the same iteration-complexity upper bound for the nonconvex setting for memory Armijo in Theorem 2.1; however, the proof does not account for the case where $\tilde{\eta}_k \leq f_1\eta_k$ does not hold. Furthermore, \citet{bilel2024complexities} does not provide an analysis for the convex setting.}, as discussed in detail in Subsection~\ref{sec-related-work}. 
\end{itemize}

\subsection{Related Work}\label{sec-related-work}

This paper studies parameter-free optimization for $(L_0,L_1)$-smooth problems.
In the following, we review related work on parameter-free methods 
and $(L_0,L_1)$-smooth optimization, respectively.
For ease of comparison, let $\Delta \defeq f(x^0)-f^*$ and $R \defeq \|x^0-x^*\|$, where $x^*$ is an optimal solution.

\paragraph*{Parameter-Free Methods for $L$-smooth Problems}

Since finding a global optimum of a nonconvex optimization problem is NP-hard in general~\citep{murty1987npcomplete},
the iteration complexity for finding an $\epsilon$-stationary point, 
i.e., a point $x\in\mathbb{R}^d$ satisfying $\|\nabla f(x)\|\leq\epsilon$, is commonly studied in the nonconvex setting.
For first-order methods on $L$-smooth problems, the iteration complexity $\mathcal{O}(L\Delta/\epsilon^2)$ is known to be optimal~\citep{carmon2019lowerbounds}.
A classical approach to achieving this rate without prior knowledge of the problem parameters is to use linesearch~\citep{armijo1966, beck2017}.
However, linesearch can be computationally expensive when function evaluations are costly 
and can make theoretical guarantees difficult to establish when function values are stochastic. 
This has motivated recent analyses of linesearch-free algorithms with an $\mathcal{O}(1/\epsilon^2)$ dependence on $\epsilon$~\citep{lan2024projectedgradientmethodsnonconvex, yagishita2025simplelsfree}.

When the objective function is convex, the iteration complexity for finding an $\epsilon$-optimal solution, i.e., a point $x\in\mathbb{R}^d$ satisfying $f(x)-f^*\leq\epsilon$, has also been extensively studied.
Linesearch methods are known to achieve an iteration complexity of $\mathcal{O}(LR^2/\epsilon)$~\citep{beck2017},
and linesearch-free algorithms with the same $\mathcal{O}(1/\epsilon)$ dependence on $\epsilon$ have also been studied~\citep{malitsky2020adaptivegradientdescentdescent, hoai2024novelstep, latafat2025adaptive, malitsky2024adaptive}.
On the other hand, the iteration-complexity lower bound for convex $L$-smooth problems is $\Omega(\sqrt{L/\epsilon}R)$~\citep{nesterov2004intro}. 
Recently, accelerated methods have been developed that attain this complexity while being both linesearch-free and parameter-free~\citep{li2025simplewolinesearch, suh2025adaptive, borodich2026nesterovgraal}.

\paragraph*{Methods for Deterministic $(L_0,L_1)$-Smooth Problems}

The notion of $(L_0,L_1)$-smoothness was introduced by \citet{zhang2020gradientclippingacceleratestraining}, who pointed out that Clip.GD and Norm.GD can efficiently find stationary points in nonconvex problems.
Here, we review prior work on deterministic $(L_0,L_1)$-smooth optimization.
Table~\ref{tbl-refs} summarizes the methods most closely related to our work and their iteration complexities in the convex and nonconvex settings.

We first summarize prior work in the convex setting and the corresponding iteration complexities for finding an $\epsilon$-optimal solution.
When the true values of $(L_0,L_1)$ are available, Clip.GD and Norm.GD are known to achieve an iteration complexity of $\mathcal{O}(L_0R^2/\epsilon + L_1 R^2)$~\citep{gorbunov2025methods, vankov2025optimizingl0l1smooth}.
Moreover, analogous to acceleration for smooth convex optimization, there exist methods for $(L_0,L_1)$-smooth problems 
that use the values of $(L_0,L_1)$ and achieve an iteration complexity with $\mathcal{O}(1/\sqrt{\epsilon})$ dependence on $\epsilon$ 
~\citep{li2023convex, gorbunov2025methods, tyurin2026nearoptimalconvergenceacceleratedgradient}.

We next review parameter-free methods in the convex setting.
\citet{takezawa2024paramfreeCGD} showed that an inexact Polyak stepsize using a lower bound $l^*$ on $f^*$ achieves an iteration complexity of $\mathcal{O}([L_0R^2+(l^*-f^*)^2]/\epsilon^2)$.
However, the resulting $(1/\epsilon^2)$ dependence is suboptimal, the additional error term $(l^*-f^*)^2$ further deteriorates the complexity bound, and the method requires prior knowledge of a valid lower bound $l^*$.
\citet{gorbunov2025methods} showed that AdGD, proposed by \citet{malitsky2020adaptivegradientdescentdescent}, is both parameter-free and linesearch-free and achieves an iteration complexity of $\mathcal{O}(L_0\mathcal{D}^2/\epsilon)$.
Here, $\mathcal{D}$ is a constant depending on $\|x^1-x^*\|^2$, $f(x^0)-f^*$, and the initial update.
\citet{vankov2025optimizingl0l1smooth, borodich2026nesterovgraal} further developed an  accelerated method that requires neither parameter tuning nor linesearch, with the latter additionally avoiding an inner loop.

While such tuning-free accelerated methods have been developed for the convex setting,
there has been surprisingly little work on parameter-free methods for deterministic nonconvex $(L_0,L_1)$-smooth problems.
\citet{hubler2024parameter} analyzed the iteration complexity of a linesearch method with monotonically increasing step sizes for $(L_0,L_1)$-smooth problems. 
However, its $L_1^2$ term has a $1/\epsilon^2$ dependence, which does not match the best-known complexity.
This dependence arises because the curvature upper bound $L_0+L_1\|\nabla f\|$ itself depends on the gradient, causing the method to retain the influence of gradients from early iterations.

As the above discussion shows, to the best of our knowledge, 
no prior work has established an iteration-complexity guarantee for a parameter-free method that simultaneously retains the leading-order complexity $\mathcal{O}(L_0R^2/\epsilon)$ for finding an $\epsilon$-optimal solution in the convex setting and matches, in the nonconvex setting, the complexity order of methods with known $(L_0,L_1)$.

\begin{table}[tb]
    \centering
    \small
    \renewcommand{\arraystretch}{1.6}
    \caption{Comparison of major related methods and their complexities in the convex and nonconvex settings.\newline
    \small
    $\tilde{\mathcal{O}}$ indicates the dependence on $L_0,L_1,R,\Delta$, and $\epsilon$.
    $\epsilon$-stat./$\epsilon$-opt. denote the iteration complexities for finding an $\epsilon$-stationary point and an $\epsilon$-optimal solution, respectively.
    NGM-RA is the only parameter-free method that provides these complexity guarantees in both the convex and nonconvex settings at the same time.  As \citet{vankov2025optimizingl0l1smooth} stated, note that  $\mathcal{O}\left( \frac{L_0 R^2}{\epsilon} + L_1 R \log \frac{\Delta}{\epsilon}\right) \subset \mathcal{O}\left( \frac{L_0 R^2}{\epsilon} + L_1^2 R^2\right)$.
    }\label{tbl-refs}
    \begin{tabular}{@{}L{0.22\linewidth} C{0.17\linewidth} C{0.22\linewidth} C{0.28\linewidth}@{}}
    \toprule
    Reference
      & parameter-\newline free
      & $\epsilon$-stat.\newline(non-convex)
      & $\epsilon$-opt.\newline(convex) \\
    \midrule
    Clip.GD/Norm.GD\newline \citet{zhang2020gradientclippingacceleratestraining}
    \newline \citet{vankov2025optimizingl0l1smooth}
        & require \(L_0,L_1\) &
        \(\tilde{\mathcal{O}}\left( \frac{L_0 \Delta}{\epsilon^2} + \frac{L_1 \Delta}{\epsilon} \right)\) &
        \(\tilde{\mathcal{O}}\left( \frac{L_0 R^2}{\epsilon} + L_1 R \log \frac{\Delta}{\epsilon}\right)\) \\
    Algorithm 2 in\newline
        \citet{hubler2024parameter}
        & \(\checkmark\) &
        \(\tilde{\mathcal{O}}\left(  \frac{L_0 \Delta + L_1^2 \Delta^2}{\epsilon^2}\right)\)
        & \(-\) \\
    AdGD\newline \citet{gorbunov2025methods}
        & \(\checkmark\) & \(-\) &
        \(\tilde{\mathcal{O}}\left( \frac{L_0 R^2}{\epsilon} + L_1^6 R^6 \right)\) \\
    Algorithm 2 in \newline \citet{tyurin2026nearoptimalconvergenceacceleratedgradient}
        & require \(L_0,L_1,R,\Delta\) & \(-\) &
        \(\tilde{\mathcal{O}}\left( \sqrt{\frac{L_0 R^2}{\epsilon}} + L_1 R \log \frac{\Delta}{\epsilon}\right)\) \\
    Accelerated GRAAL\newline\citet{borodich2026nesterovgraal}
        &\(\checkmark\) & \(-\) &
        \(\tilde{\mathcal{O}}\left( \sqrt{\frac{L_0 R^2}{\epsilon}} + L_1^3 R^3\right)\) \\
    \textbf{\hyperref[algo-ngm-ra]{NGM-RA} (ours)} 
        & \(\checkmark\) &
        \(\tilde{\mathcal{O}}\left( \frac{L_0 \Delta}{\epsilon^2} + \frac{L_1 \Delta}{\epsilon} \right)\) &
        \(\tilde{\mathcal{O}}\left( \frac{L_0 R^2}{\epsilon} + L_1 R \log \frac{\Delta}{\epsilon}\right)\) \\
    \bottomrule
    \end{tabular}
\end{table}

\subsection{Assumptions and Notation}\label{assumptions-and-notation}

Throughout this paper, we assume that the objective function $f$ satisfies the regularity conditions in Assumption~\ref{asm-basic} 
and the $(L_0,L_1)$-smoothness condition in Assumption~\ref{asm-l0l1sm}.

\begin{assumption}\label{asm-basic}
The function $f$ satisfies the following conditions.

\begin{enumerate}
\item[(a)] $f$ is bounded below:
$f^* \defeq \inf_{x\in\mathbb{R}^d} f(x)>-\infty$.
\item[(b)] $f$ is twice continuously differentiable.
\end{enumerate}
\end{assumption}

\begin{assumption}\label{asm-l0l1sm}
The function $f$ is $(L_0,L_1)$-smooth for some $(L_0,L_1)\in\R^2_{\geq 0}$.
\end{assumption}

When discussing convergence to the optimal value in the convex setting, we additionally impose Assumption~\ref{asm-convex}.

\begin{assumption}\label{asm-convex}
The function $f$ is convex, and there exists an optimal solution $x^*$ satisfying $f(x^*)=f^*$.
\end{assumption}

\section{Normalized Gradient Method with Region-based Adaptation}\label{gradient-method-with-parameter-localization-for-adaptive-normalization}

Intuitively, the normalized gradient method with region-based adaptation (\hyperref[algo-ngm-ra]{NGM-RA}) maintains the estimated region $E_k$ of $(L_0,L_1)$ values 
that satisfy Equation~\ref{eq-l0l1-def} in Definition~\ref{def-l0l1sm} over the iterates $\{x_l\}_{l=0}^k$, 
and performs updates analogous to the normalized gradient method using $(L_0,L_1)\in E_k$ chosen to maximize the step size.
NGM-RA can also be viewed as an analogue of cutting-plane and localization methods~\citep{boyd2007localization} for problem parameters, with Equation~\ref{eq-l0l1-def} serving as a valid inequality.
In this section, we present NGM-RA, summarized in Algorithm~\ref{algo-ngm-ra}, and provide several remarks on its details.
The initial region $O_{\xi,r_0}$ and the function $\Phi$ used in the algorithm are defined as follows:
\[
\begin{aligned}
O_{\xi, r_0} &\defeq \left\{ (L_0,L_1) \in \R^2 \relmid L_0 \geq \xi L_1, \, L_1 \geq \xi L_0, \,L_0 + L_1 \geq \sqrt{2} r_0\right\},
\\
\Phi(s) &\defeq 2 \frac{e^s - 1 - s}{s^2}\; \text{for} \; s>0 \; \text{and} \; \Phi(0) \defeq 1.
\end{aligned}
\]
Both $\Phi$ and the mapping $s \mapsto s\Phi(cs)$, for any constant $c>0$, are strictly increasing on $s\ge0$, as can be verified from their power-series expansions.

\floatstyle{ruled}
\restylefloat{algorithm}
\begin{algorithm}[t]
\caption{\label{algo-ngm-ra}NGM-RA}
\begin{algorithmic}[1]
\Require $x^0 \in \R^d, r_0 >0, 0<\xi <1, 2 \leq 2 \beta < \alpha, \, \beta < \frac{1}{2\xi}\, \Phi\left(\frac{1}{\alpha \xi}\right), 0< \tau \leq 1$
\State $E_0 \gets O_{\xi, r_0}, \, k\gets 0$
\Repeat
  \State $g_k \gets \|\nabla f(x^k)\|$
  \State $\gamma_k \gets \min_{(l_0,l_1) \in E_k} l_0 + l_1 g_k$
  \State $x'^{k+1} \gets x^{k} - \frac{1}{\alpha \gamma_k} \nabla f(x^k)$
  \State $M_k \gets \frac{2\alpha^2 \gamma_k^2}{g_k^2} \left[f(x'^{k+1}) - f(x^k) + g_k^2/(\alpha \gamma_k)\right]$
  \If{$M_k > 2\beta \gamma_k$}
    \State \textbf{Compute} $s_{k}$ \textbf{solving} $s_k \Phi\left( \frac{s_k}{\alpha} \right) = \frac{M_k}{\gamma_k}$
    \State $E_{k+1} \gets E_k \cap \left\{(l_0,l_1) \relmid l_0 + l_1 g_k \geq s_k \gamma_k\right\}$
    \State $x^{k+1} \gets x^k$
  \Else
    \State $E_{k+1} \gets O_{\xi, r_0} \cap \tau E_k$
    \State $x^{k+1} \gets x'^{k+1}$
  \EndIf
  \State $k \gets k+1$
\Until{Termination criterion is satisfied.}
\end{algorithmic}
\end{algorithm}
\floatstyle{plain}

The constants $\alpha,\beta$, and $\xi$ used in the algorithm are chosen in advance as follows.
First, we choose the threshold $\beta$ for determining whether a tentative update is accepted or rejected such that $\beta\geq 1$.
Next, we choose the coefficient $\alpha$, which makes the estimate conservative, such that $\alpha>2\beta$.
Finally, to ensure that the estimation procedure terminates after finitely many iterations, we choose $\xi>0$ sufficiently small so that
$\beta < \frac{1}{2\xi}\Phi\left(\frac{1}{\alpha\xi}\right)$.
The parameter $\tau$ can be chosen to satisfy $\tau\leq1$.
For example, one may take $\beta=1$, $\alpha=3$, $\xi=r_0=10^{-2}$, and $\tau=0.9$.
When $\tau<1$, the algorithm allows larger step sizes at successful iterations, whereas setting $\tau=1$ leads to improved iteration-complexity bounds; see Corollaries~\ref{cor-sta-total} and~\ref{cor-opt-total}.
In Section~\ref{sec-theory}, we establish iteration-complexity guarantees for $(L_0,L_1)\in O_{\xi,r_0}$.
Thus, choosing smaller $\xi$ and $r_0$ allows the method to adapt over a broader range of $(L_0,L_1)$ values and potentially exploit smaller parameter values. 
However, as shown in Subsection~\ref{sec-num-fail}, this comes with a trade-off: the number of unsuccessful iterations $N_{\mathrm{fail}}$,
during which the iterate is not updated, increases.
In practice, this effect is small, so we choose $\xi$ and $r_0$ sufficiently small as above.

In Algorithm~\ref{algo-ngm-ra}, $\gamma_k$ represents the curvature estimated from past iterates, 
while $M_k$ represents the newly observed curvature along the descent direction, since 
\[
M_k = 2\frac{f(x'^{k+1})-f(x^k) - \nabla f(x^k)^T (x'^{k+1}-x^k)}{\|x'^{k+1}-x^k\|^2}.
\]
At iteration $k$, we declare the curvature estimate unsuccessful if $M_k>2\beta\gamma_k$, and successful if $M_k\leq2\beta\gamma_k$.
In other words, the curvature estimate is considered successful 
when the estimated curvature is conservative relative to the observed curvature.
The region $E_k$ has the geometry illustrated in Figure~\ref{fig:ek-int}. 
Its update can be understood intuitively as follows.
An unsuccessful estimate indicates that the current region $E_k$ is too permissive and has led to an underestimate of the curvature.
We therefore restrict the $(L_0,L_1)$ region so that the condition corresponding to Equation~\ref{eq-l0l1-def} in Definition~\ref{def-l0l1sm} is satisfied for $x^{k+1}$ and $x^k$ (Figure~\ref{fig:ek-failed}).
For a successful estimate, reflecting the fact that the curvature estimate was conservative, 
we contract $E_k$ toward the origin by a factor of $\tau\,(\leq 1)$, thereby allowing more aggressive step sizes (Figure~\ref{fig:ek-success}).

\begin{figure}[t]
\centering

\begin{minipage}[b]{0.32\textwidth}
  \centering
  \includegraphics[width=\textwidth]{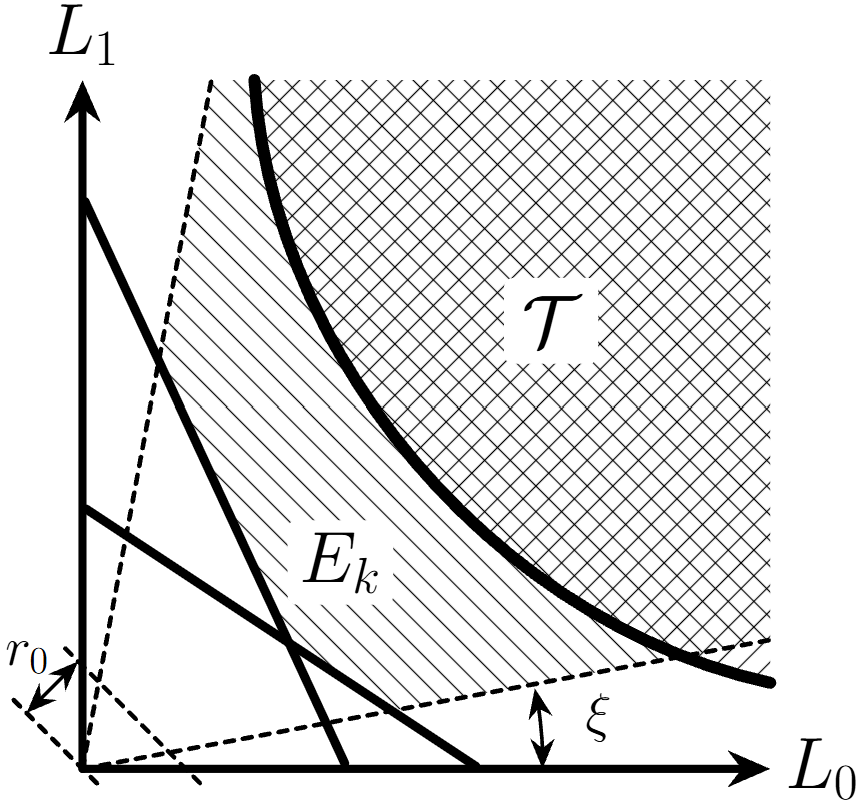}
  \caption{Geometry of the region $E_k$ and the true parameter region $\mathcal{T}$}
  \label{fig:ek-int}
\end{minipage}
\hfill
\begin{minipage}[b]{0.32\textwidth}
  \centering
  \includegraphics[width=0.95\textwidth]{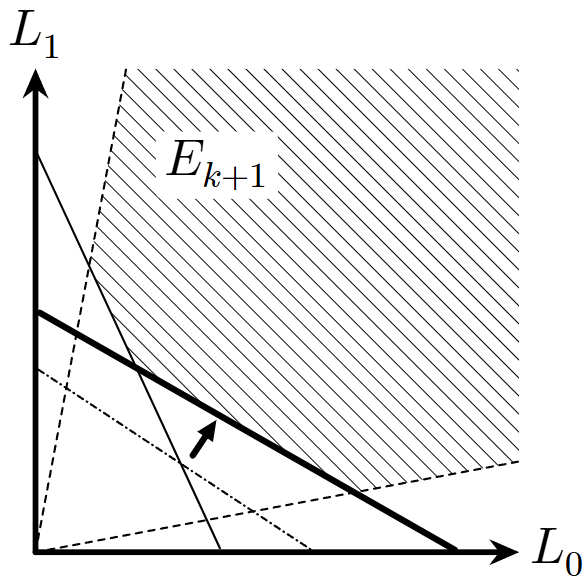}
  \caption{Cutting $E_k$ after an unsuccessful curvature estimate}
  \label{fig:ek-failed}
\end{minipage}
\begin{minipage}[b]{0.32\textwidth}
  \centering
  \includegraphics[width=0.95\textwidth]{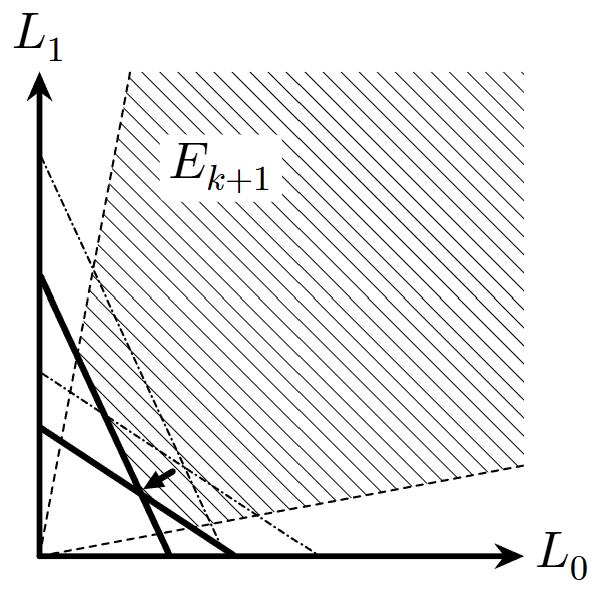}
  \caption{Expansion of $E_k$ after a successful curvature estimate}
  \label{fig:ek-success}
\end{minipage}

\end{figure}

As seen from the update rule for $E_k$ after a successful estimate, Algorithm~\ref{algo-ngm-ra} moves the boundary of $E_k$ toward the origin by a factor of $\tau<1$, thereby allowing the estimated region to expand.
Consequently, even if an excessively large curvature is observed near the initial point, its influence can decay geometrically over subsequent iterations.

The left-hand side of the equation
$s \Phi\left(\frac{s}{\alpha}\right)=\frac{M_k}{\gamma_k}$
used to determine $s_k$ is monotone in the scalar variable $s$. 
Hence, $s_k$ can be computed efficiently, for example, by binary search, and the associated computational cost is negligible.
The computation $\gamma_k \gets \min_{(l_0,l_1)\in E_k} l_0+l_1g_k$ 
is a two-dimensional linear program because $E_k$ is a convex polygon. Since it suffices to examine its vertices, the computational cost is negligible in practice.
Moreover, the number of vertices of $E_k$ is at most on the order of the number of unsuccessful estimates.

\section{Theoretical Analysis}\label{sec-theory}
In this section, we derive upper bounds on the iteration complexity of Algorithm~\ref{algo-ngm-ra} for finding an $\epsilon$-stationary point in the nonconvex setting and an $\epsilon$-optimal solution in the convex setting.

The main idea of the analysis is simple.
The acceptance criterion in Algorithm~\ref{algo-ngm-ra} is based on essentially the same inequality that plays a central role in the convergence analysis of algorithms with known $L_0$ and $L_1$.
For rejected updates, the estimated region shrinks, which allows us to bound the total number of rejected updates (Theorem~\ref{thm-failnum}).
For accepted updates, the existing convergence analysis can be applied with only minor modifications (Theorems~\ref{thm-eps-comp},~\ref{thm-conv-optv}).
This simplicity is an advantage of estimating a region of plausible values for the unknown parameters.

\subsection{Preliminaries}\label{sec-prep}
In this subsection, we introduce the notation used in the proofs and establish basic properties of the estimated region.
Following the notation used in Algorithm~\ref{algo-ngm-ra}, we denote the gradient norm at iteration $k$ by $g_k$, the estimated curvature along the gradient direction by $\gamma_k$, and the observed curvature by $M_k$:
\[
\begin{aligned}
g_k
&\defeq \|\nabla f(x^k)\|,\\
\gamma_k
&\defeq \min_{(l_0,l_1)\in E_k} l_0+l_1g_k,\\
M_k
&\defeq \frac{2\alpha^2 \gamma_k^2}{g_k^2} \left[f(x'^{k+1}) - f(x^k) + g_k^2/(\alpha \gamma_k)\right].
\end{aligned}
\]

We further introduce notation used in the proofs.
We define the set of $(L_0,L_1)$-smoothness parameters for the objective function $f$ as
\[
\mathcal{T} \defeq \left\{
(L_0,L_1)\in\R^2_{\geq 0}
\relmid
\forall x\in\mathbb{R}^d,
\|\nabla^2 f(x)\| \leq L_0+L_1\|\nabla f(x)\|
\right\}.
\]

Let
$\tilde{\mathcal{T}}\defeq\mathcal{T}\cap O_{\xi,r_0}$,
which is nonempty, and fix an arbitrary $(L_0,L_1)\in\tilde{\mathcal{T}}$.
Note that these values need not be known by the algorithm. Moreover, all subsequent upper bounds hold for every $(L_0,L_1)\in\tilde{\mathcal{T}}$ and can therefore be optimized by taking the infimum over $\tilde{\mathcal{T}}$.
We also define the following cone:
\[
C_\xi \defeq \left\{(l_0,l_1)  \in \R^2_{\geq 0}\relmid l_0 \geq \xi l_1, \, l_1 \geq \xi l_0\right\}.
\] 
Using this notation, the initial estimated region can be expressed as
$O_{\xi,r_0}=C_\xi\cap\left\{(l_0,l_1) \in\mathbb{R}_{\geq0}^2 \relmid l_0+l_1\geq\sqrt{2}r_0\right\}$.

We use the following upper bound on the directional curvature in the subsequent analysis.

\begin{lemma}[Upper bound on curvature~\citep{vankov2025optimizingl0l1smooth}, Lemma
2.5]\label{lem-upper}

Suppose that Assumptions~\ref{asm-basic} and~\ref{asm-l0l1sm} hold. Define the directional curvature at $x$ along $d$ by
\[
M(x,d) \defeq 2\frac{f(x+d) - f(x) - \nabla f(x)^T d}{\|d\|^2}.
\] 
Then, for any $(L_0,L_1)\in\mathcal{T}$,
\[
M(x,d) \leq (L_0 + L_1 \|\nabla f(x)\|) \Phi(L_1 \|d\|).
\]

\end{lemma}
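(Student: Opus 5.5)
We reduce the statement to a one-dimensional problem along the segment from $x$ to $x+d$ and then apply a Grönwall-type comparison to the gradient norm. If $d=0$ the quantity $M(x,d)$ is undefined, so assume $d\neq 0$. Set $\phi(t)\defeq f(x+td)$ for $t\in[0,1]$. Taylor's formula with integral remainder, valid by Assumption~\ref{asm-basic}(b), gives
\[
f(x+d)-f(x)-\nabla f(x)^Td=\int_0^1(1-t)\,\phi''(t)\,dt,\qquad \phi''(t)=d^T\nabla^2 f(x+td)\,d .
\]
Since $(L_0,L_1)\in\mathcal{T}$, we have $|\phi''(t)|\le \|d\|^2\bigl(L_0+L_1\|\nabla f(x+td)\|\bigr)$. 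The task is therefore to bound $h(t)\defeq L_0+L_1\|\nabla f(x+td)\|$ along the segment.

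\textbf{Step 1: growth of the gradient norm.} Let $u(t)\defeq\|\nabla f(x+td)\|$. From $\nabla f(x+td)-\nabla f(x)=\int_0^t\nabla^2 f(x+sd)\,d\,ds$ and the $(L_0,L_1)$ bound we get
\[
u(t)\le u(0)+\|d\|\int_0^t\bigl(L_0+L_1u(s)\bigr)\,ds .
\]
Hence $h(t)\le h(0)+L_1\|d\|\int_0^t h(s)\,ds$. The integral form of Grönwall's inequality then yields $h(t)\le h(0)\,e^{L_1\|d\|t}$ with $h(0)=L_0+L_1\|\nabla f(x)\|$. This step needs $u$ to be continuous, which holds because $f\in C^2$. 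It requires no differentiability of the norm, since we work only with the integral inequality, so the points where $\nabla f$ vanishes cause no difficulty.

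\textbf{Step 2: integrate.} Write $s\defeq L_1\|d\|$. Then
\[
f(x+d)-f(x)-\nabla f(x)^Td\le \|d\|^2h(0)\int_0^1(1-t)e^{st}\,dt .
\]
Integration by parts gives $\int_0^1(1-t)e^{st}\,dt=(e^s-1-s)/s^2=\Phi(s)/2$ for $s>0$, and the value is $1/2$ when $s=0$, consistent with $\Phi(0)=1$. Multiplying by $2/\|d\|^2$ gives $M(x,d)\le (L_0+L_1\|\nabla f(x)\|)\,\Phi(L_1\|d\|)$, as claimed.

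The only real obstacle is Step 1: the comparison argument must be justified for a possibly non-smooth $u$. Using the integral form of Grönwall's inequality avoids this issue, so the lemma follows directly from Assumptions~\ref{asm-basic} and~\ref{asm-l0l1sm}.
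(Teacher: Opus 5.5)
Your proof is correct and complete: the integral Taylor remainder, the Gr\"onwall bound $L_0+L_1\|\nabla f(x+td)\|\le (L_0+L_1\|\nabla f(x)\|)e^{L_1\|d\|t}$ derived from the integral inequality, and the identity $\int_0^1(1-t)e^{st}\,\mathrm{d}t=\Phi(s)/2$ all check out, and the case $L_1=0$ is covered. The paper gives no proof of its own and imports the lemma from Vankov et al.; as far as I know, your argument (Gr\"onwall along the segment, then integrate the remainder) is the standard proof of that result.
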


For the $L$-smooth setting, \citet{lan2024projectedgradientmethodsnonconvex} and \citet{yagishita2025simplelsfree} use the quantity $\max_{i<k}M_i$ as an estimate of the smoothness parameter $L$ for step-size selection.
We see that $M_i\le L$, and hence each $M_i$ is a lower bound on the true smoothness parameter $L$ observed in each iteration.
Since $\max_{i<k} M_i = \min \left\{l \ge 0 \relmid l \ge M_i,\, \forall i < k\right\}$,
these methods can be interpreted as progressively narrowing down the set of plausible values of the true parameter and then selecting the smallest admissible value, up to a conservative factor, to obtain an aggressive step size. 
This interpretation suggests a natural extension to multiple coupled unknown parameters: instead of maintaining a scalar estimate, we maintain a region of parameter values consistent with the observations and select an aggressive parameter pair from that region for step-size selection. 
From this perspective, Lemma~\ref{lem-upper} suggests updating the estimated region as follows.
\[
E_{k+1} \gets E_k \cap \left\{(l_0,l_1) \in\R^2_{\geq 0} \relmid (l_0 + l_1 g_k) \Phi\left(\frac{l_1 g_k}{\alpha \gamma_k}\right) \geq M_k \right\}.
\]
However, this update would make $E_k$ a nonlinear region and complicate the computation of $\gamma_k$.
To avoid this difficulty, we choose $s$ such that
\[
\left\{(l_0,l_1) \in\R^2_{\geq 0} \relmid (l_0 + l_1 g_k) \Phi\left(\frac{l_1 g_k}{\alpha \gamma_k}\right) \geq M_k \right\} \subset \left\{(l_0,l_1) \in\R^2_{\geq 0} \relmid l_0 + l_1 g_k\geq s \gamma_k\right\}
\] 
and update the estimated region using the resulting linear boundary.
The following lemma guarantees this inclusion. The proof can be found in
Appendix~\ref{sec-prep-proof}.

\begin{tcolorbox}[mytheoremstyle]
\begin{restatable}[$\tilde{\mathcal{T}}\subset E_k$]{lemma}{lemKkconsv}\label{lem-Kk-consv}

Suppose that Assumptions~\ref{asm-basic} and~\ref{asm-l0l1sm} hold.
Then, $\tilde{\mathcal{T}}\subset E_k$ for all $k$.
\end{restatable}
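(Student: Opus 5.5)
We argue by induction on $k$. The base case is immediate: $E_0=O_{\xi,r_0}$ and $\tilde{\mathcal{T}}=\mathcal{T}\cap O_{\xi,r_0}\subset O_{\xi,r_0}$. For the inductive step, assume $\tilde{\mathcal{T}}\subset E_k$ and fix an arbitrary $(L_0,L_1)\in\tilde{\mathcal{T}}$. We treat the two branches of Algorithm~\ref{algo-ngm-ra} separately.

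\emph{Successful branch.} Here $E_{k+1}=O_{\xi,r_0}\cap\tau E_k$, and since $(L_0,L_1)\in O_{\xi,r_0}$ already, it suffices to show $(L_0,L_1)\in\tau E_k$, i.e.\ $(L_0,L_1)/\tau\in E_k$. We prove a stronger invariant alongside the induction: $E_k$ is upward closed along rays, that is, $\lambda E_k\subset E_k$ for all $\lambda\ge 1$. This holds for $O_{\xi,r_0}$, because $C_\xi$ is a cone and the constraint $l_0+l_1\ge\sqrt2 r_0$ is preserved under scaling by $\lambda\ge1$ in $\R^2_{\ge0}$. It also holds for every half-plane $\{l_0+l_1g_k\ge s_k\gamma_k\}$ with $s_k\gamma_k\ge0$, and it is preserved under intersections and under the map $E\mapsto\tau E$. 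Given the invariant, $(L_0,L_1)\in E_k$ and $1/\tau\ge1$ yield $(L_0,L_1)/\tau\in E_k$, which closes this branch.

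\emph{Unsuccessful branch.} We must show $L_0+L_1g_k\ge s_k\gamma_k$. Apply Lemma~\ref{lem-upper} with $x=x^k$ and $d=-\nabla f(x^k)/(\alpha\gamma_k)$. Then $\|d\|=g_k/(\alpha\gamma_k)$ and $M(x^k,d)=M_k$, so
\[
M_k\le (L_0+L_1g_k)\,\Phi\!\left(\frac{L_1g_k}{\alpha\gamma_k}\right)\le (L_0+L_1g_k)\,\Phi\!\left(\frac{L_0+L_1g_k}{\alpha\gamma_k}\right),
\]
where the second inequality uses that $\Phi$ is increasing and $L_0\ge0$. Set $t\defeq(L_0+L_1g_k)/\gamma_k$; dividing by $\gamma_k$ gives $t\,\Phi(t/\alpha)\ge M_k/\gamma_k=s_k\Phi(s_k/\alpha)$. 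Because $s\mapsto s\Phi(s/\alpha)$ is strictly increasing on $s\ge0$, we conclude $t\ge s_k$, which is exactly the required cut inequality. Together with $(L_0,L_1)\in E_k$, this gives $(L_0,L_1)\in E_{k+1}$.

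The main obstacle is the well-definedness details: showing $\gamma_k>0$ and that $s_k$ exists. For $\gamma_k>0$, note that every $E_k$ is contained in $O_{\xi,r_0}$, and on $O_{\xi,r_0}$ we have $l_0\ge \xi l_1$ and $l_0+l_1\ge\sqrt2 r_0$, hence $l_0\ge\xi\sqrt2 r_0/(1+\xi)>0$ and so $\gamma_k>0$. For $s_k$, the map $s\mapsto s\Phi(s/\alpha)$ is continuous, strictly increasing, and unbounded on $s\ge0$, and in the unsuccessful branch $M_k/\gamma_k>2\beta>0$, so a unique $s_k\ge0$ solves the equation. We also need $E_k\neq\emptyset$ so that the minimum defining $\gamma_k$ exists. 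This follows from the induction hypothesis, since $\tilde{\mathcal{T}}$ is nonempty; the minimum is attained because the objective $l_0+l_1g_k$ is nonnegative and $E_k$ is a closed polyhedron. The case $g_k=0$ has to be handled separately, with the algorithm's convention of stopping there.
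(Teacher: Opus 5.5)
Your proof is correct and follows the paper's argument. It uses induction with the trivial base case, and in the unsuccessful branch it makes the same application of Lemma~\ref{lem-upper} along $d=-\nabla f(x^k)/(\alpha\gamma_k)$, followed by strict monotonicity of $s\mapsto s\Phi(s/\alpha)$, to obtain $L_0+L_1g_k\ge s_k\gamma_k$. In the successful branch the paper simply asserts $E_{k+1}\supset E_k$; your invariant $\lambda E_k\subset E_k$ for $\lambda\ge 1$, together with $E_k\subset O_{\xi,r_0}$, is what justifies that assertion, and your remarks on $\gamma_k>0$ and the existence of $s_k$ supply well-definedness details the paper leaves implicit.
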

\end{tcolorbox}

The success condition can be rewritten in the following form, which is key to establishing the convergence guarantees.

\begin{tcolorbox}[mytheoremstyle]
\begin{restatable}[Descent inequality]{lemma}{lemdescent}\label{lem-descent}
Suppose that Assumptions~\ref{asm-basic} and~\ref{asm-l0l1sm} hold, and let $(L_0,L_1)\in\tilde{\mathcal{T}}$. At every successful iteration of Algorithm~\ref{algo-ngm-ra}, the following inequality holds.
\[
f(x^{k+1}) - f(x^k) \leq - \frac{\alpha - \beta}{\alpha^2\gamma_k} \|\nabla f(x^k)\|^2 \leq - \frac{\alpha - \beta}{\alpha^2} \frac{g_k^2}{L_0 + L_1 g_k}.
\]
\end{restatable}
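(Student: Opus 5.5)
The plan is to rewrite the success condition $M_k\le 2\beta\gamma_k$ directly in terms of function values, and then use Lemma~\ref{lem-Kk-consv} to compare $\gamma_k$ with $L_0+L_1g_k$.

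\textbf{First inequality.} At a successful iteration we have $x^{k+1}=x'^{k+1}=x^k-\frac{1}{\alpha\gamma_k}\nabla f(x^k)$. We also have $M_k\le 2\beta\gamma_k$. By definition of $M_k$ this reads
\[
\frac{2\alpha^2\gamma_k^2}{g_k^2}\Bigl[f(x^{k+1})-f(x^k)+\frac{g_k^2}{\alpha\gamma_k}\Bigr]\le 2\beta\gamma_k .
\]
This step assumes $g_k>0$; if $g_k=0$ the iterate is already stationary and the inequality is trivial, since $x^{k+1}=x^k$. Multiplying through by $g_k^2/(2\alpha^2\gamma_k^2)>0$ gives
\[
f(x^{k+1})-f(x^k)\le \frac{\beta g_k^2}{\alpha^2\gamma_k}-\frac{g_k^2}{\alpha\gamma_k}=-\frac{\alpha-\beta}{\alpha^2\gamma_k}g_k^2 ,
\]
which is the first inequality. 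We need $\gamma_k>0$ here. This holds because $E_k\subset O_{\xi,r_0}$: every point of $O_{\xi,r_0}$ satisfies $l_0\ge \xi l_1\ge0$ and $l_0+l_1\ge\sqrt2 r_0$, so $l_0>0$ (if $l_0=0$ then $l_1=0$, which contradicts $l_0+l_1>0$). Consequently $l_0+l_1g_k\ge l_0>0$. The minimum is attained and is strictly positive, because the region is closed and the linear objective is bounded below by a positive multiple of $l_0+l_1$ on the cone $C_\xi$.

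\textbf{Second inequality.} By Lemma~\ref{lem-Kk-consv}, the fixed pair $(L_0,L_1)\in\tilde{\mathcal T}$ lies in $E_k$. Since $\gamma_k$ is the minimum over $E_k$, this gives $\gamma_k\le L_0+L_1g_k$. Because $\alpha>2\beta>\beta$, the coefficient $-(\alpha-\beta)g_k^2/\alpha^2$ is nonpositive. Replacing $1/\gamma_k$ by the smaller quantity $1/(L_0+L_1g_k)$ therefore only increases the right-hand side, which yields the second inequality.

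\textbf{Expected obstacle.} Essentially nothing is hard here; the main care points are the strict positivity of $\gamma_k$ and the degenerate case $g_k=0$. All the substantive work is contained in Lemma~\ref{lem-Kk-consv}, which we take as given.
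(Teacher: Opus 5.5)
Your proposal is correct and follows essentially the same route as the paper's proof. Like the paper, you rearrange the acceptance condition $M_k\le 2\beta\gamma_k$ via the definition of $M_k$ to get the first inequality, then use $\tilde{\mathcal{T}}\subset E_k$ (Lemma~\ref{lem-Kk-consv}) to get $\gamma_k\le L_0+L_1g_k$ for the second; your extra checks that $\gamma_k>0$ (via $E_k\subset O_{\xi,r_0}$) and that the case $g_k=0$ is trivial are sound details the paper leaves implicit.
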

\end{tcolorbox}

\begin{proof}[Proof sketch]
The result follows directly by rearranging the acceptance condition:
\[
M_k = 2\frac{f(x^{k+1}) - f(x^k) + \frac{1}{\alpha \gamma_k} \|\nabla f(x^k)\|^2 }{\|x^{k+1} - x^k\|^2}\leq 2\beta \gamma_k.
\]
\end{proof}

\subsection{Upper Bound on the Number of Failures}\label{sec-num-fail}

The total number of iterations is the sum of successful and unsuccessful iterations. 
In this subsection, let $N_{\mathrm{succ}}^{(k)}$ and $N_{\mathrm{fail}}^{(k)}$ denote the numbers of successful and unsuccessful iterations up to iteration $k$, respectively. 
We derive an upper bound on $N_{\mathrm{fail}}^{(k)}$ using $N_{\mathrm{succ}}^{(k)}$.
This analysis applies to both the convex and nonconvex settings.

To avoid overestimating the size of regions far from the origin, we introduce the following measure.
\[
A\in \mathcal{M}(\R^2), \, \nu(A) \defeq \int_A \frac{\mathrm{d} l_0 \mathrm{d} l_1}{l_0^2 + l_1^2}.
\] 
Here, $\mathcal{M}(\mathbb{R}^2)$ denotes the collection of Lebesgue-measurable subsets of $\mathbb{R}^2$.
This measure is scale-invariant: $\nu(A)=\nu(tA)$ for any $t>0$.
Let $\mathcal{E}\defeq O_{\xi,r_0}\backslash\tilde{\mathcal{T}}$ denote the region ,
and let $F_k\defeq O_{\xi,r_0}\backslash
E_k$ denote the region excluded from the estimated region by unsuccessful iterations. 
By Lemma~\ref{lem-Kk-consv}, we have $F_k\subset\mathcal{E}$ for all $k$.

Furthermore, let $S_I$ be a lower bound on the increase of $F_k$ in the measure at a unsuccessful iteration, 
and let $S_D$ be an upper bound on its decrease at a successful iteration. 
By the additivity of the measure, $N_\mathrm{fail}^{(k)}$ can then be bounded as in Equation~\ref{eq-nf-funda}.
\begin{equation}\protect\phantomsection\label{eq-nf-funda}{
\nu(\mathcal{E}) \geq \nu(F_k)\geq N_\mathrm{fail}^{(k)} S_I - N_{\mathrm{succ}}^{(k)} S_D \quad
\mathrm{i.e.}
\quad N_\mathrm{fail}^{(k)} \leq \frac{\nu(\mathcal{E})}{S_I} + N_\mathrm{succ}^{(k)} \frac{S_D}{S_I}.
}\end{equation}

We therefore proceed by evaluating $S_I$, $S_D$, and $\nu(\mathcal{E})$.
The detailed proofs are provided in Appendix~\ref{sec-fail-proof}.

\begin{tcolorbox}[mytheoremstyle]
\begin{restatable}[Lower bound $S_I$ on the increase in $\nu(F_k)$ at a unsuccessful iteration]{lemma}{lemdkarea}\label{lem-dk-area}

At a unsuccessful iteration, the increase in $\nu(F_k)$ is bounded below as follows. 
\[
\nu(F_{k+1}) -  \nu(F_{k})  \geq \frac{\xi (\mu-1)^2}{2 (1+\xi^2)\mu^2} \fedeq S_I.
\]
Here, $\mu>1$ is the positive constant satisfying $\mu\Phi(\mu/\alpha)=2\beta$.

\end{restatable}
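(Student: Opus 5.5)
The plan is to exhibit an explicit subset of the cut-off region $E_k\setminus E_{k+1}$ whose $\nu$-measure is easy to bound from below, uniformly in $g_k$ and in the current minimizer. By scale invariance of $\nu$, I would normalize throughout so that $\gamma_k=1$.

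\emph{Step 1 (the cut is deep enough).} At an unsuccessful iteration, $M_k/\gamma_k>2\beta=\mu\Phi(\mu/\alpha)$. Since $s\mapsto s\Phi(s/\alpha)$ is strictly increasing, this gives $s_k>\mu$. Hence
\[
E_k\setminus E_{k+1}\supset E_k\cap\{(l_0,l_1) : l_0+l_1g_k<\mu\gamma_k\}.
\]
Moreover, $F_{k+1}\setminus F_k=E_k\setminus E_{k+1}$ because $E_{k+1}\subset E_k$, so it suffices to bound the $\nu$-measure of this set from below.

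\emph{Step 2 (structure of $E_k$).} By induction on $k$, I would show that $E_k$ is convex and satisfies $E_k+C_\xi\subset E_k$. For $O_{\xi,r_0}$ this holds because $C_\xi$ is a convex cone and adding elements of $C_\xi$ can only increase $l_0+l_1$. Both properties are preserved under three operations:
\begin{itemize}
\item intersection with half-planes $\{l_0+l_1g\ge c\}$, whose normals are nonnegative;
\item scaling by $\tau$, since $\tau(E+C_\xi)=\tau E+C_\xi$;
\item intersection with $O_{\xi,r_0}$.
\end{itemize}
Let $p^*\in E_k$ attain $\gamma_k=\min_{E_k} l_0+l_1g_k$; it exists because $E_k$ is closed and the objective is coercive on $C_\xi$. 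Then $E_k\supset p^*+C_\xi$. Combined with Step 1, the removed region contains the triangle
\[
T=\mathrm{conv}\{p^*,\;p^*+a(1,\xi),\;p^*+b(\xi,1)\},
\qquad a=\frac{\mu-1}{1+\xi g_k},\quad b=\frac{\mu-1}{\xi+g_k},
\]
whose far edge lies on the line $l_0+l_1g_k=\mu$, using $p^*\cdot(1,g_k)=1$. Its Lebesgue area is $\tfrac12 ab(1-\xi^2)$, up to possibly a different sin-factor from the angle between the two edge rays.

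\emph{Step 3 (weight bound).} For $p\in T\subset C_\xi$, the constraint $l_0\ge \xi l_1$, $l_1\ge\xi l_0$ gives $|p|$ comparable to $l_0+l_1$. Together with $p\cdot(1,g_k)\le\mu$, this yields an upper bound on $|p|^2$ of the form $\mu^2\,c(\xi,g_k)$. Then
\[
\nu(T)\ge \frac{\mathrm{area}(T)}{\max_{T}|p|^2},
\]
and the task is to check that the $g_k$-dependence cancels between $ab$ and $\max|p|^2$. Both scale like $(1+\xi g_k)^{-1}(\xi+g_k)^{-1}$ after using $(1+\xi g)(\xi+g)\ge \xi(1+g)^2$-type inequalities. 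The outcome should be $\xi(\mu-1)^2/(2(1+\xi^2)\mu^2)$.

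\emph{Main obstacle.} The main difficulty is Step 3: getting a bound uniform in $g_k\in[0,\infty)$ and in the location of $p^*$. If the crude max-norm bound loses the exact constant, I would instead integrate in polar coordinates, $\nu=\int\!\!\int r^{-1}\,dr\,d\theta$, over the cone directions. Each ray in $p^*+C_\xi$ traverses radii from $|p|$ at the level set $l\cdot(1,g)=1$ to $\mu|p|$ roughly. That contributes $\log\mu\ge (\mu-1)^2/\mu^2$-type amounts over an angular width governed by $\xi$ and $1+\xi^2$, and I would tune this to match the stated constant.
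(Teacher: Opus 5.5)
Your Steps 1 and 2 are correct, but the set they certify is too small to give the stated constant, so Step 3 cannot be completed. The property $E_k+C_\xi\subset E_k$ only puts the translated cone $p^*+C_\xi$ inside $E_k$, and the removed set you obtain is exactly $T=(p^*+C_\xi)\cap\{l_0+l_1g_k\le\mu\gamma_k\}$.

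Carrying out your own weight bound (normalizing $\gamma_k=1$) gives $\mathrm{area}(T)=\frac{(1-\xi^2)(\mu-1)^2}{2(1+\xi g_k)(\xi+g_k)}$ and $\max_T\|p\|^2\le\mu^2(1+\xi^2)\max\{(1+\xi g_k)^{-2},(\xi+g_k)^{-2}\}$. The $g_k$-dependence does not cancel: it leaves a factor $\min\{\frac{\xi+g_k}{1+\xi g_k},\frac{1+\xi g_k}{\xi+g_k}\}\ge\xi$. Combined with the angular factor $1-\xi^2$ of $T$, you end with $\nu(T)\ge(1-\xi^2)S_I$, not $S_I$.

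Exact polar integration does not rescue this. Take $g_k\to0$ with $p^*$ on the edge $l_0=\xi l_1$; this happens, e.g., when $E_k=O_{\xi,r_0}\cap\{l_0+Gl_1\ge c\}$ with $G>g_k$. After normalization, $p^*\to(1,1/\xi)$, and $T$ is the triangle with vertices $(1,1/\xi)$, $(\mu,1/\xi+\xi(\mu-1))$, $(\mu,\mu/\xi)$. Now take the admissible choice $\beta=1$, $\alpha=2.001$, $\xi=0.65$, so that $\mu\approx1.526<1/\xi$. Direct integration then gives $\nu(T)\approx0.023$, whereas $S_I\approx0.027$. So your route proves at best $\nu(F_{k+1})-\nu(F_k)\ge(1-\xi^2)S_I$. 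That is enough for the $\mathcal{O}$-bound on $N_{\mathrm{fail}}$, but it is not the stated inequality, and no tuning of Step 3 fixes it.

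The missing idea is a stronger structural fact. Every $E_k$ has the form $C_\xi\cap\bigcap_j\{l_0+l_1g_j\ge c_j\}$ with $g_j\ge0$; cuts, scaling by $\tau$, and intersection with $O_{\xi,r_0}$ all preserve this form. Hence $E_k$ is closed under \emph{coordinatewise} increases that stay in $C_\xi$, not merely under adding elements of $C_\xi$.

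With $p^*=(l_{0,k},l_{1,k})$, this puts the whole box piece $G_k=C_\xi\cap([l_{0,k},\mu l_{0,k}]\times[l_{1,k},\mu l_{1,k}])$ inside $E_k$. Its top corner $\mu p^*$ satisfies $\mu l_{0,k}+\mu l_{1,k}g_k=\mu\gamma_k<s_k\gamma_k$, so $G_k\subset E_k\setminus E_{k+1}$, with no $g_k$-dependence to cancel at all. In the example above, $G_k$ is $T$ plus the sliver with vertices $(1,1/\xi)$, $(\mu,1/\xi)$, $(\mu,1/\xi+\xi(\mu-1))$, which is exactly the mass you lose.

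The paper then finishes in three steps:
\begin{itemize}
\item It uses $\mu<1/\xi$, a consequence of $\beta<\frac{1}{2\xi}\Phi(\frac{1}{\alpha\xi})$ that your argument never invokes, to show that one of the two half-box right triangles lies in $C_\xi$. This gives Lebesgue area at least $\frac12(\mu-1)^2l_{0,k}l_{1,k}$.
\item It bounds $l_0^2+l_1^2\le\mu^2\|p^*\|^2$ on the box.
\item It minimizes $\theta_0\theta_1$ over unit $\theta\in C_\xi$, obtaining $\xi/(1+\xi^2)$.
\end{itemize}
Together these give exactly $S_I$.
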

\end{tcolorbox}

\begin{tcolorbox}[mytheoremstyle]
\begin{restatable}[Upper bound $S_D$ on the decrease in $\nu(F_k)$ at a successful iteration]{lemma}{lemseval}\label{lem-se-val}

At a successful iteration, the decrease in $\nu(F_k)$ is bounded above as follows.
\[
\nu(F_k) -  \nu(F_{k+1}) \leq \frac{\pi}{2} \log \frac{1}{\tau} \fedeq S_D.
\] 

\end{restatable}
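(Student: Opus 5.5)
At a successful iteration we have $E_{k+1}=O_{\xi,r_0}\cap\tau E_k$, so $F_{k+1}=O_{\xi,r_0}\setminus(\tau E_k)$. The plan is to compare $\nu(F_{k+1})$ with $\nu(F_k)$ using the scale invariance of $\nu$. Write $O\defeq O_{\xi,r_0}$. Every point of $O\setminus\tau E_k$ lies in exactly one of the sets $O\setminus\tau O$ and $(O\cap\tau O)\setminus\tau E_k$. The second set is contained in $\tau(O\setminus E_k)=\tau F_k$, because $E_k\subset O$. The first set is contained in $C_\xi\setminus O$: since $\tau\le1$ and $O$ is upward closed along rays within $C_\xi$, we have $O\subset\tau O$, so $O\setminus\tau O$ is in fact empty. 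This inclusion only yields an upper bound on $\nu(F_{k+1})$, namely $\nu(F_{k+1})\le\nu(\tau F_k)=\nu(F_k)$, which is the wrong direction. We need a lower bound on $\nu(F_{k+1})$, so the argument has to go the other way.

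We need
\[
\nu(F_{k+1})\ge\nu(F_k)-\tfrac{\pi}{2}\log\tfrac1\tau .
\]
We have $F_{k+1}=O\setminus\tau E_k\supseteq O\cap\tau F_k$, because a point of $O\cap\tau F_k$ has the form $\tau y$ with $y\in O\setminus E_k$. Such a point is not in $\tau E_k$, since scaling by $\tau$ is injective. Hence
\[
\nu(F_{k+1})\ge\nu(\tau F_k)-\nu(\tau F_k\setminus O)=\nu(F_k)-\nu(\tau F_k\setminus O).
\]
Here we used the scale invariance $\nu(\tau A)=\nu(A)$.

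It then suffices to bound $\nu(\tau F_k\setminus O)\le\nu(\tau O\setminus O)$. The set $\tau O\setminus O$ is the band of the cone $C_\xi$ between the lines $l_0+l_1=\sqrt2\tau r_0$ and $l_0+l_1=\sqrt2 r_0$. We compute its measure in polar coordinates, where $\mathrm{d}l_0\,\mathrm{d}l_1/(l_0^2+l_1^2)=\mathrm{d}\rho\,\mathrm{d}\theta/\rho$. For each angle $\theta$ in the cone, the radial range is $[\tau\rho_\theta,\rho_\theta]$, which gives $\int\mathrm{d}\rho/\rho=\log(1/\tau)$. The cone's angular width is at most $\pi/2$, since $C_\xi$ lies in the first quadrant. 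Therefore $\nu(\tau O\setminus O)\le\frac{\pi}{2}\log\frac1\tau$, which gives the claim.

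The only delicate points are set-theoretic. First, the identity $F_{k+1}\supseteq O\cap\tau F_k$ needs $E_k\subset O$, which holds by construction. Second, every point of $\tau F_k\setminus O$ must lie in the band; this uses $\tau F_k\subset\tau O\subset C_\xi$ and the fact that a point of $C_\xi$ outside $O$ has $l_0+l_1<\sqrt2 r_0$. Both checks are routine, and the main work is the polar-coordinate computation above.
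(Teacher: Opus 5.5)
Your proof is correct and follows essentially the same route as the paper: both arguments show $\tau F_k \subset F_{k+1}\sqcup C'$, where $C'=C_\xi\cap\{\sqrt{2}\tau r_0\le l_0+l_1\le\sqrt{2}r_0\}$ is your band $\tau O\setminus O$ up to boundary, and then combine the scale invariance $\nu(\tau F_k)=\nu(F_k)$ with the same polar-coordinate computation $\nu(C')\le\frac{\pi}{2}\log\frac{1}{\tau}$. Your opening paragraph is a false start that you correctly discard, so it can be dropped from a final write-up; the inclusion $O\cap\tau F_k\subset F_{k+1}$ also does not actually need $E_k\subset O$.
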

\end{tcolorbox}

We can further bound the measure of $\mathcal{E}$ as follows.

\begin{tcolorbox}[mytheoremstyle]
\begin{restatable}[Upper bound on the measure of $\mathcal{E}$]{lemma}{lemEarea}\label{lem-E-area}

Suppose that Assumptions~\ref{asm-basic} and~\ref{asm-l0l1sm} hold, and let $(L_0,L_1)\in\tilde{\mathcal{T}}$. Then, the following bound holds.
\[
\nu(\mathcal{E}) \leq \frac{\pi}{2} \log \left(\frac{1}{r_0} \sqrt{\left(\frac{1}{\xi^2} + 1\right)(L_0^2 + L_1^2)}\right) .
\]

\end{restatable}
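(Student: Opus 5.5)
We bound $\nu(\mathcal{E})$ by exhibiting an explicit superset of $\mathcal{E}=O_{\xi,r_0}\setminus\tilde{\mathcal{T}}$ whose $\nu$-measure can be computed in polar coordinates. The key observation is that $\mathcal{T}$ is upward closed. If $(L_0,L_1)\in\mathcal{T}$ and $l_0\ge L_0$, $l_1\ge L_1$, then $(l_0,l_1)\in\mathcal{T}$, because the right-hand side of Equation~\ref{eq-l0l1-def} only increases. Hence every point of $O_{\xi,r_0}$ in the quadrant $\{l_0\ge L_0,\ l_1\ge L_1\}$ lies in $\tilde{\mathcal{T}}$. Consequently $\mathcal{E}$ is contained in the part of the cone $C_\xi$ satisfying $l_0+l_1\ge\sqrt2 r_0$ and ($l_0<L_0$ or $l_1<L_1$).

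Next we convert this into a radial bound. In polar coordinates $(l_0,l_1)=\rho(\cos\theta,\sin\theta)$, the cone $C_\xi$ corresponds to $\theta\in[\theta_\xi,\pi/2-\theta_\xi]$ with $\tan\theta_\xi=\xi$, so $\sin\theta,\cos\theta\ge \xi/\sqrt{1+\xi^2}$ there.

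For the lower radius, $l_0+l_1\le\sqrt2\rho$ gives $\rho\ge r_0$ on $O_{\xi,r_0}$.

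For the upper radius, a point with $l_0<L_0$ satisfies $\rho\cos\theta<L_0$, hence $\rho<L_0\sqrt{1+\xi^2}/\xi$. Similarly $l_1<L_1$ gives $\rho<L_1\sqrt{1+\xi^2}/\xi$. In either case
\[
\rho<\frac{\sqrt{1+\xi^2}}{\xi}\max(L_0,L_1)\le\sqrt{\left(\frac{1}{\xi^2}+1\right)(L_0^2+L_1^2)}\fedeq\rho_{\max}.
\]
So $\mathcal{E}\subset\{\theta\in[\theta_\xi,\pi/2-\theta_\xi],\ r_0\le\rho\le\rho_{\max}\}$.

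Finally we integrate. Since $\mathrm{d}l_0\,\mathrm{d}l_1/(l_0^2+l_1^2)=\mathrm{d}\rho\,\mathrm{d}\theta/\rho$,
\[
\nu(\mathcal{E})\le\int_{\theta_\xi}^{\pi/2-\theta_\xi}\int_{r_0}^{\rho_{\max}}\frac{\mathrm{d}\rho}{\rho}\,\mathrm{d}\theta\le\frac{\pi}{2}\log\frac{\rho_{\max}}{r_0},
\]
where we bound the angular range by $\pi/2$. This is the claimed bound. If $\rho_{\max}<r_0$, the region is empty and the bound is trivial, provided we interpret it with a positive part. Note that $(L_0,L_1)\in O_{\xi,r_0}$ actually forces $\rho_{\max}\ge r_0$.

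The main obstacle is the upward-closedness step. We must check that the enlarged quadrant point still lies in $O_{\xi,r_0}$, but this is automatic because we only use points already in $O_{\xi,r_0}$. The remaining work is the elementary bound $\cos\theta\ge\xi/\sqrt{1+\xi^2}$ on the cone.
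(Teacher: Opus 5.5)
Your proof is correct and follows essentially the same route as the paper. Upward-closedness of $\mathcal{T}$ forces $l_0<L_0$ or $l_1<L_1$ on $\mathcal{E}$, and the cone constraint then confines $\mathcal{E}$ to the sector $r_0\le\|\theta\|\le\sqrt{(1/\xi^2+1)(L_0^2+L_1^2)}$, where integrating $\mathrm{d}\rho/\rho$ over an angular range of at most $\pi/2$ gives the bound. The differences are cosmetic: you get the radial bound from $\cos\theta\ge\xi/\sqrt{1+\xi^2}$ rather than the paper's $\theta_1\le\theta_0/\xi\le L_0/\xi$, and you state explicitly the upward-closedness that the paper leaves implicit.
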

\end{tcolorbox}

Combining the above bounds, we obtain the following upper bound on $N_\mathrm{fail}$.

\begin{tcolorbox}[mytheoremstyle]
\begin{restatable}[Upper bound on the number of unsuccessful iterations]{theorem}{thmfailnum}\label{thm-failnum}

Suppose that Assumptions~\ref{asm-basic} and~\ref{asm-l0l1sm} hold, and let $(L_0,L_1)\in\tilde{\mathcal{T}}$.
For Algorithm~\ref{algo-ngm-ra}, $N_\mathrm{fail}^{(k)}$, which is the number of unsuccessful iterations up to iteration $k$, is bounded as follows.
\[
\begin{aligned}
N_\mathrm{fail}^{(k)}
&\leq \frac{\pi(1+\xi^2)\mu^2}{\xi (\mu-1)^2}\left[\log \left(\frac{1}{r_0} \sqrt{\left(\frac{1}{\xi^2} + 1\right)(L_0^2 + L_1^2)}\right) 
+ N_{\mathrm{succ}}^{(k)} \log \frac{1}{\tau}\right]
\\
&\in \mathcal{O}\left( \frac{1}{\xi} \left[
 \log \left(\frac{\sqrt{L_0^2 + L_1^2}}{r_0 \xi}\right) + N_{\mathrm{succ}}^{(k)} \log \frac{1}{\tau}\right]
\right).
\end{aligned}
\]

\end{restatable}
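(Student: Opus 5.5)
The plan is to combine the three preceding lemmas through the counting inequality in Equation~\ref{eq-nf-funda}, so the proof is essentially bookkeeping. First I would justify Equation~\ref{eq-nf-funda} carefully. Since $E_0 = O_{\xi,r_0}$, we have $F_0 = \emptyset$ and $\nu(F_0)=0$. Writing $\nu(F_k) = \sum_{j<k} \bigl(\nu(F_{j+1})-\nu(F_j)\bigr)$, I would split the sum into unsuccessful and successful iterations. Lemma~\ref{lem-dk-area} says each unsuccessful term is at least $S_I$, and Lemma~\ref{lem-se-val} says each successful term is at least $-S_D$. Hence $\nu(F_k) \geq N_\mathrm{fail}^{(k)} S_I - N_\mathrm{succ}^{(k)} S_D$.

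Next, Lemma~\ref{lem-Kk-consv} gives $\tilde{\mathcal{T}}\subset E_k$, so $F_k = O_{\xi,r_0}\setminus E_k \subset \mathcal{E}$. Monotonicity of $\nu$ then gives $\nu(F_k)\leq\nu(\mathcal{E})$. One point needs checking here: all the quantities involved must be finite. Lemma~\ref{lem-E-area} ensures $\nu(\mathcal{E})<\infty$, and $F_k$ is contained in $\mathcal{E}$, so the telescoping differences are well defined and no expression of the form $\infty-\infty$ arises.

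Rearranging, using $S_I>0$ (which holds because $\mu>1$ and $\xi>0$), gives
\[
N_\mathrm{fail}^{(k)} \leq \frac{\nu(\mathcal{E}) + N_\mathrm{succ}^{(k)} S_D}{S_I}.
\]
I would then substitute the three bounds. The factor $1/S_I$ equals $\frac{2(1+\xi^2)\mu^2}{\xi(\mu-1)^2}$. Lemma~\ref{lem-E-area} gives $\nu(\mathcal{E}) \leq \frac{\pi}{2}\log\bigl(\frac{1}{r_0}\sqrt{(1/\xi^2+1)(L_0^2+L_1^2)}\bigr)$, and $S_D=\frac{\pi}{2}\log\frac1\tau$. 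The common factor $\pi/2$ times $2$ yields exactly the stated prefactor $\frac{\pi(1+\xi^2)\mu^2}{\xi(\mu-1)^2}$.

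For the $\mathcal{O}$ form I would make two observations. First, $\mu$ depends only on $\alpha,\beta$, so $\mu^2/(\mu-1)^2$ is a constant, and $1+\xi^2\leq 2$ since $\xi<1$. Second, $\sqrt{1/\xi^2+1}\leq \sqrt2/\xi$, so the logarithm is at most $\log\bigl(\sqrt{L_0^2+L_1^2}/(r_0\xi)\bigr)+\tfrac12\log 2$. The additive constant can be absorbed because the logarithm is bounded below by a positive constant: $(L_0,L_1)\in O_{\xi,r_0}$ gives $\sqrt{L_0^2+L_1^2}\geq (L_0+L_1)/\sqrt2 \geq r_0$, and dividing by $\xi<1$ makes the ratio exceed $1$.

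I expect the main obstacle to be the rigorous justification of the telescoping step rather than the arithmetic. In particular, I need to confirm that each lemma applies to every iteration of its type, independently of the history. This holds because Lemmas~\ref{lem-dk-area} and~\ref{lem-se-val} are stated per iteration.
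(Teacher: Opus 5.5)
Your proposal is correct and follows the paper's proof, which simply combines Lemmas~\ref{lem-dk-area}, \ref{lem-se-val}, and~\ref{lem-E-area} through Equation~\ref{eq-nf-funda}; you spell out $F_0=\emptyset$, the finiteness of $\nu(\mathcal{E})$, and the constant bookkeeping more explicitly than the paper does. One small tightening for the $\mathcal{O}$-form: showing that the ratio exceeds $1$ only makes the logarithm positive, not uniformly bounded away from zero, so to absorb the additive $\tfrac12\log 2$ use $\xi<1/\mu$ (established in the proof of Lemma~\ref{lem-dk-area}), which gives a logarithm larger than $\log\mu$, a positive constant depending only on $\alpha,\beta$.
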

\end{tcolorbox}

\begin{proof}
By Lemmas~\ref{lem-dk-area}, \ref{lem-se-val}, and \ref{lem-E-area}, together with Equation~\ref{eq-nf-funda}, we obtain the result.
\end{proof}

Setting $\tau=1$ gives an upper bound independent of $N_\mathrm{succ}^{(k)}$.

\subsection{Iteration Complexity for Convex Problems}\label{sec-cvx}
In this subsection, under Assumptions~\ref{asm-basic}, \ref{asm-l0l1sm}, and \ref{asm-convex}, we derive an upper bound on the iteration complexity of Algorithm~\ref{algo-ngm-ra} for convex $(L_0,L_1)$-smooth problems.
The detailed proof is provided in Appendix~\ref{sec-cvx-proof}.

\begin{tcolorbox}[mytheoremstyle]
\begin{restatable}[Number of successful iterations to an $\epsilon$-optimal solution]{theorem}{thmconvoptv}\label{thm-conv-optv}
Suppose that Assumptions~\ref{asm-basic}, \ref{asm-l0l1sm}, and \ref{asm-convex} hold, and let $(L_0,L_1)\in\tilde{\mathcal{T}}$.
The number of successful iterations $N_{\epsilon,\mathrm{opt}}$ required by Algorithm~\ref{algo-ngm-ra} to find an $\epsilon$-optimal solution is bounded as follows.
\[
\begin{aligned}
N_{\epsilon, \mathrm{opt}} 
&\leq 1+\frac{\alpha^2}{\alpha - \beta}\left(L_0 R^2 \frac{1}{\epsilon} + L_1 R \log \frac{\Delta}{\epsilon}\right)\\
&\in \mathcal{O}\left( \frac{L_0 R^2}{\epsilon} + L_1 R \log \frac{\Delta}{\epsilon} \right) \subset \mathcal{O}\left( \frac{L_0 R^2}{\epsilon} + (L_1 R)^2 \right).
\end{aligned}
\]
\end{restatable}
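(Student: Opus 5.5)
Only successful iterations move the iterate, so I would index them by $j=0,1,2,\dots$ and write $y^j$ for the corresponding iterates. For each such $j$ I combine the descent inequality of Lemma~\ref{lem-descent} with a distance recursion that comes from convexity. Let $c\defeq(\alpha-\beta)/\alpha^2$, $h_j\defeq f(y^j)-f^*$, and let $\gamma$ be the estimate used at step $j$. Lemma~\ref{lem-descent} gives $h_{j+1}\le h_j - c\,g_j^2/\gamma$, so $(h_j)$ is nonincreasing. Next I expand
\[
\|y^{j+1}-x^*\|^2=\|y^j-x^*\|^2-\tfrac{2}{\alpha\gamma}\nabla f(y^j)^T(y^j-x^*)+\tfrac{g_j^2}{\alpha^2\gamma^2}.
\]
Convexity gives $\nabla f(y^j)^T(y^j-x^*)\ge h_j$. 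The last term satisfies $g_j^2/(\alpha^2\gamma^2)\le \tfrac{1}{\alpha\gamma}\cdot\tfrac{g_j^2}{\alpha\gamma}$ and $g_j^2/(\alpha\gamma)\le \tfrac{\alpha}{\alpha-\beta}(h_j-h_{j+1})\le 2(h_j-h_{j+1})$, which is at most $2h_j$ or so. Since $\alpha>2\beta$, this should be dominated by the first-order term. The outcome I aim for is $\|y^{j+1}-x^*\|\le\|y^j-x^*\|\le R$, and more quantitatively $\tfrac{1}{\alpha\gamma}h_{j+1}\lesssim \|y^j-x^*\|^2-\|y^{j+1}-x^*\|^2$. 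If these constants do not close, I fall back on simply using $\|y^j-x^*\|\le R$ together with the standard bound $h_j\le g_jR$.

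The main route then follows the known-parameter analysis of \citet{vankov2025optimizingl0l1smooth}. Lemma~\ref{lem-Kk-consv} ensures $(L_0,L_1)\in E_k$, hence $\gamma\le L_0+L_1g_j$. Lemma~\ref{lem-descent} then gives $h_{j+1}\le h_j - c\,g_j^2/(L_0+L_1g_j)$ with $g_j\ge h_j/R$ from convexity. Since $t\mapsto t^2/(L_0+L_1t)$ is increasing, this becomes
\[
h_{j+1}\le h_j - c\,\frac{h_j^2}{R(L_0R+L_1h_j)}.
\]
I then split into two phases.
\begin{itemize}
\item While $L_1h_j\ge L_0R$, the decrement is at least $\tfrac{c}{2L_1R}h_j$. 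Hence $h_j$ contracts geometrically by the factor $1-c/(2L_1R)$, and reaching $\max\{\epsilon,L_0R/L_1\}$ from $\Delta$ takes $\mathcal{O}\bigl(L_1R\log(\Delta/\epsilon)\bigr)$ successful steps.
\item While $L_1h_j< L_0R$, the decrement is at least $\tfrac{c}{2L_0R^2}h_j^2$. The usual reciprocal argument $1/h_{j+1}-1/h_j\ge c/(2L_0R^2)$ then gives $\mathcal{O}(L_0R^2/\epsilon)$ steps.
\end{itemize}
Summing the two phases and adding $1$ for the initial/terminal index yields the stated $1+\tfrac{\alpha^2}{\alpha-\beta}(L_0R^2/\epsilon+L_1R\log(\Delta/\epsilon))$, up to tracking the factor $2$.

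The obstacle I expect is matching the exact constant $\tfrac{\alpha^2}{\alpha-\beta}$ without a factor $2$. To do this I would avoid the phase split and bound $\tfrac{h_j^2}{R(L_0R+L_1h_j)}$ directly. I would integrate the recursion by comparing with $\int_{\epsilon}^{\Delta}\tfrac{R(L_0R+L_1h)}{h^2}\,dh = L_0R^2(1/\epsilon-1/\Delta)+L_1R\log(\Delta/\epsilon)$, using the monotonicity of $h_j$ and a step-by-step comparison $\tfrac{h_j-h_{j+1}}{\phi(h_j)}\le\int_{h_{j+1}}^{h_j}\tfrac{dh}{\phi(h)}$ with $\phi(h)=h^2/(R(L_0R+L_1h))$ increasing. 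Each step then consumes at least $c$ units of the integral while $h_j>\epsilon$, which gives exactly the claimed bound. The last inclusion into $\mathcal{O}(L_0R^2/\epsilon+(L_1R)^2)$ is the remark already cited from \citet{vankov2025optimizingl0l1smooth}, using $\Delta\le$ a bound in terms of $L_0R^2,L_1R$ from $(L_0,L_1)$-smoothness.
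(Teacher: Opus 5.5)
Your proposal is correct and follows the paper's proof. You first get monotonicity of $\|y^j-x^*\|$ from convexity, the descent inequality and $\alpha\ge 2\beta$; your chain actually closes, giving $\|y^{j+1}-x^*\|^2\le\|y^j-x^*\|^2-\frac{2}{\alpha\gamma}h_{j+1}$, so the fallback, which presupposes $\|y^j-x^*\|\le R$, is not needed. You then use $g_j\ge h_j/R$ and the step-by-step integral comparison with the increasing function $\phi(h)=h^2/(R(L_0R+L_1h))$, summed over the steps with $h_{j+1}\ge\epsilon$ (the extra $1$ accounts for the final step), and the last inclusion via $\Delta\le\frac{L_0R^2}{2}e^{L_1R}$, exactly as the paper does.
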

\end{tcolorbox}

The proof follows essentially the same argument as Theorem 3.2 of \citet{vankov2025optimizingl0l1smooth} for an algorithm with known parameters.
This ability to leverage existing parameter-dependent analyses is one major advantage of our methodology.

Combining Theorem~\ref{thm-failnum} with the above result yields the following bound on the total iteration complexity.

\begin{corollary}[Total number of iterations to an $\epsilon$-optimal solution for convex $(L_0,L_1)$-smooth functions]\protect\hypertarget{cor-opt-total}{}\label{cor-opt-total}
Suppose that Assumptions~\ref{asm-basic}, \ref{asm-l0l1sm}, and \ref{asm-convex} hold, and let $(L_0,L_1)\in\tilde{\mathcal{T}}$.
The total number of iterations $N_\mathrm{fail} + N_{\epsilon,\mathrm{opt}}$ required by Algorithm~\ref{algo-ngm-ra} to find an $\epsilon$-optimal solution is bounded as follows.

\[
\begin{aligned}
&N_\mathrm{fail} + N_{\epsilon,\mathrm{opt}} \\
&\in \mathcal{O}\left( \frac{1}{\xi} \left[
 \log \left(\frac{\sqrt{L_0^2 + L_1^2}}{r_0 \xi}\right) + \left(\frac{L_0 R^2}{\epsilon} + L_1 R \log \frac{\Delta}{\epsilon}\right) \left(\log \frac{1}{\tau}+\xi\right)\right]
\right).
\end{aligned}
\]
\end{corollary}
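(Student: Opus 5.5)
The corollary follows by combining Theorem~\ref{thm-failnum} with Theorem~\ref{thm-conv-optv}. Let $K$ be the first iteration index at which the number of successful iterations reaches $N_{\epsilon,\mathrm{opt}}$, so that $x^{K}$ is $\epsilon$-optimal. Up to that point the total number of iterations is $N_{\mathrm{succ}}^{(K)}+N_{\mathrm{fail}}^{(K)}$ with $N_{\mathrm{succ}}^{(K)}=N_{\epsilon,\mathrm{opt}}$.

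One point should be checked first. Unsuccessful iterations leave $x^{k+1}=x^k$, and successful iterations are monotone by Lemma~\ref{lem-descent}. Hence $f(x^k)$ is nonincreasing, and the iterate sequence seen by the successful steps is exactly the one analyzed in Theorem~\ref{thm-conv-optv}. The interleaving of failures therefore does not affect that bound, and $x^K$ is indeed $\epsilon$-optimal.

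Next, apply Theorem~\ref{thm-failnum} at iteration $K$:
\[
N_{\mathrm{fail}}^{(K)} \le C_1\frac{1}{\xi}\left[\log\left(\frac{\sqrt{L_0^2+L_1^2}}{r_0\xi}\right)+N_{\epsilon,\mathrm{opt}}\log\frac{1}{\tau}\right].
\]
Here $C_1=\pi(1+\xi^2)\mu^2/(\mu-1)^2$ depends only on $\alpha,\beta$, since $\mu$ is fixed by $\mu\Phi(\mu/\alpha)=2\beta$ and $\xi<1$. The logarithm in Theorem~\ref{thm-failnum} is $\log\bigl(\frac{1}{r_0}\sqrt{(1/\xi^2+1)(L_0^2+L_1^2)}\bigr)\le \log\bigl(\frac{\sqrt{2}\sqrt{L_0^2+L_1^2}}{r_0\xi}\bigr)$. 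The extra constant $\log\sqrt{2}$ can be absorbed because $(L_0,L_1)\in O_{\xi,r_0}$ gives $\sqrt{L_0^2+L_1^2}\ge r_0$, so the logarithm is bounded below.

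Then add $N_{\epsilon,\mathrm{opt}}=\frac{1}{\xi}\cdot\xi N_{\epsilon,\mathrm{opt}}$ to this bound. This gives
\[
\frac{1}{\xi}\Big[\log(\cdot)+N_{\epsilon,\mathrm{opt}}\left(\log\tfrac{1}{\tau}+\xi\right)\Big],
\]
up to constants. Substituting the bound of Theorem~\ref{thm-conv-optv}, $N_{\epsilon,\mathrm{opt}}\le 1+\frac{\alpha^2}{\alpha-\beta}\bigl(L_0R^2/\epsilon+L_1R\log(\Delta/\epsilon)\bigr)$, then yields the claimed form.

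The only delicate step is absorbing the additive ``$1$'' in $N_{\epsilon,\mathrm{opt}}$. It contributes at most $\frac{1}{\xi}(\log\frac{1}{\tau}+\xi)$. I would handle this either by treating it as lower order, in the regime where $L_0R^2/\epsilon$ is bounded below, which is standard in $\mathcal{O}$-statements, or by noting that it is dominated by the constant term that can be added to the logarithm. The constants depending on $\alpha,\beta$ are treated as absolute in the $\mathcal{O}$-notation.
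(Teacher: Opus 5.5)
Your proposal is correct and matches the paper, which proves the corollary simply by combining Theorem~\ref{thm-failnum} (with $N_{\mathrm{succ}}^{(k)}=N_{\epsilon,\mathrm{opt}}$) and Theorem~\ref{thm-conv-optv}, absorbing without comment the same constants you track. Two small bookkeeping points remain. First, absorbing $\log\sqrt{2}$ needs the uniform bound $\log(1/\xi)>\log\mu>0$, which follows from the parameter condition as in the proof of Lemma~\ref{lem-dk-area}, and not merely that the logarithm is bounded below. Second, your second option for the additive $1$ covers the resulting $+1$ but not the $\frac{1}{\xi}\log\frac{1}{\tau}$ piece. So either use your first option, which is the paper's implicit convention, or apply Theorem~\ref{thm-failnum} just before the final successful step, so that only $N_{\epsilon,\mathrm{opt}}-1$ successes are charged.
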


In particular, when $\tau = 1$, the bound simplifies to
\[
N_\mathrm{fail} + N_{\epsilon,\mathrm{opt}} \in \mathcal{O}\left( \frac{1}{\xi}
 \log \left(\frac{\sqrt{L_0^2 + L_1^2}}{r_0 \xi}\right) + \left(\frac{L_0 R^2}{\epsilon} + L_1 R \log \frac{\Delta}{\epsilon}\right)\right),
\]
which eliminates the $1/\xi$ factor from the $\epsilon$-dependent term.

\subsection{Iteration Complexity for Nonconvex Problems}\label{sec-ncvx}

In this subsection, we show that, in the nonconvex setting, the number of successful iterations required to find an $\epsilon$-stationary point is $\mathcal{O}\left(\left(\frac{L_0}{\epsilon^2}+\frac{L_1}{\epsilon}\right)\Delta\right)$.
Since Lemma~\ref{lem-descent} already provides the required descent inequality at every successful iteration, the result follows from a straightforward telescoping-sum argument.
The detailed proof is provided in Appendix~\ref{sec-ncvx-proof}.

\begin{tcolorbox}[mytheoremstyle]
\begin{restatable}[Number of successful iterations to an $\epsilon$-stationary point]{theorem}{thmepscomp}\label{thm-eps-comp}
Suppose that Assumptions~\ref{asm-basic} and~\ref{asm-l0l1sm} hold, and let $(L_0,L_1)\in\tilde{\mathcal{T}}$.
The number of successful iterations $N_{\epsilon,\mathrm{sta}}$ required by Algorithm~\ref{algo-ngm-ra} to find an $\epsilon$-stationary point is bounded as follows.
\[
\begin{aligned}
N_{\epsilon,\mathrm{sta}} 
&\leq\frac{\alpha^2}{\alpha - \beta} \left(\frac{L_0}{\epsilon^2} + \frac{L_1}{\epsilon}\right)  \Delta
\in \mathcal{O}\left( \left(\frac{L_0}{\epsilon^2} + \frac{L_1}{\epsilon}\right) \Delta \right).
\end{aligned}
\]

\end{restatable}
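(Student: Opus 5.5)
The plan is a telescoping argument over the successful iterations only, using Lemma~\ref{lem-descent}. At an unsuccessful iteration $x^{k+1}=x^k$, so the function value does not change. At a successful iteration Lemma~\ref{lem-descent} gives
\[
f(x^{k+1}) \le f(x^k) - \frac{\alpha-\beta}{\alpha^2}\,\frac{g_k^2}{L_0+L_1g_k}.
\]
The key elementary observation is that $t\mapsto t^2/(L_0+L_1t)$ is nondecreasing on $t\ge 0$. Its derivative is $t(2L_0+L_1t)/(L_0+L_1t)^2\ge 0$.

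Suppose no $\epsilon$-stationary point has been found among the iterates at the first $N$ successful iterations, so $g_k>\epsilon$ at each of them. Then each such iteration decreases $f$ by at least
\[
\frac{\alpha-\beta}{\alpha^2}\,\frac{\epsilon^2}{L_0+L_1\epsilon}.
\]
Summing over these $N$ successful iterations, and noting that failed iterations contribute zero, yields
\[
N\,\frac{\alpha-\beta}{\alpha^2}\,\frac{\epsilon^2}{L_0+L_1\epsilon} \le f(x^0)-f(x^{K}) \le f(x^0)-f^*=\Delta,
\]
where $K$ is the current iteration index. Here $f^*>-\infty$ by Assumption~\ref{asm-basic}.

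Rearranging gives
\[
N\le \frac{\alpha^2}{\alpha-\beta}\,\frac{L_0+L_1\epsilon}{\epsilon^2}\,\Delta = \frac{\alpha^2}{\alpha-\beta}\left(\frac{L_0}{\epsilon^2}+\frac{L_1}{\epsilon}\right)\Delta .
\]
Hence the number of successful iterations performed before an $\epsilon$-stationary point appears satisfies the stated bound. Note that $\alpha>\beta$ by the parameter choice $\alpha>2\beta$, so the constant is positive.

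The only point needing care is bookkeeping for which iterate is declared $\epsilon$-stationary. The gradient norm $g_k$ is evaluated at $x^k$ before the step, and failed iterations repeat the same point. So if $N$ exceeds the bound, some successful iteration must have had $g_k\le\epsilon$, meaning $x^k$ was already $\epsilon$-stationary. Equivalently, one counts successful iterations at points with $g_k>\epsilon$.

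I expect no real obstacle. The substance, namely the descent inequality with $\gamma_k\le L_0+L_1g_k$, which follows from Lemma~\ref{lem-Kk-consv} since $(L_0,L_1)\in\tilde{\mathcal{T}}\subset E_k$, is already contained in Lemma~\ref{lem-descent}.
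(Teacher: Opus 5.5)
Your proposal is correct and matches the paper's proof: both bound each successful-iteration decrease from Lemma~\ref{lem-descent} below by $\frac{\alpha-\beta}{\alpha^2}\frac{\epsilon^2}{L_0+L_1\epsilon}$, using that $t\mapsto t^2/(L_0+L_1t)$ is nondecreasing, and then telescope against $\Delta$, with unsuccessful iterations contributing nothing because $x^{k+1}=x^k$. Your bookkeeping counts only successful iterations with $g_k>\epsilon$, all of which occur before the first $\epsilon$-stationary iterate; this states explicitly what the paper's sum over successful iterations with $g_k\geq\epsilon$ does implicitly.
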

\end{tcolorbox}

\begin{proof}[Proof sketch]
Summing the inequality in Lemma~\ref{lem-descent} over $k$ and using the fact that $x\mapsto x^2/(L_0+L_1x)$ is nondecreasing for $x\geq0$ yields the result.
\end{proof}

Combining Theorem~\ref{thm-failnum} with the above result yields the following bound on the total iteration complexity.

\begin{corollary}[Upper bound on the total number of iterations to an $\epsilon$-stationary point]\protect\hypertarget{cor-sta-total}{}\label{cor-sta-total}
Suppose that Assumptions~\ref{asm-basic} and~\ref{asm-l0l1sm} hold, and let $(L_0,L_1)\in\tilde{\mathcal{T}}$.
The total number of iterations $N_\mathrm{fail} + N_{\epsilon,\mathrm{sta}}$ required by Algorithm~\ref{algo-ngm-ra} to find an $\epsilon$-stationary point is of the following order.

\[
\begin{aligned}
&N_\mathrm{fail} + N_{\epsilon,\mathrm{sta}} \\
&\in \mathcal{O}\left( \frac{1}{\xi} \left[
 \log \left(\frac{\sqrt{L_0^2 + L_1^2}}{r_0 \xi}\right) + \left(\frac{L_0}{\epsilon^2} + \frac{L_1}{\epsilon}\right)\Delta \left(\log \frac{1}{\tau}+\xi\right)\right] \right).
\end{aligned}
\]
\end{corollary}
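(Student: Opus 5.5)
The plan is to combine Theorem~\ref{thm-failnum} with Theorem~\ref{thm-eps-comp}, in the same way Corollary~\ref{cor-opt-total} combines Theorem~\ref{thm-failnum} with Theorem~\ref{thm-conv-optv}. Let $K$ be the first iteration index at which $\|\nabla f(x^K)\|\le\epsilon$. Since unsuccessful iterations leave the iterate unchanged ($x^{k+1}=x^k$), only successful iterations move the iterate. Hence the number of successful iterations performed before reaching $x^K$ is at most $N_{\epsilon,\mathrm{sta}}$. By Theorem~\ref{thm-eps-comp},
\[
N_{\mathrm{succ}}^{(K)} \le N_{\epsilon,\mathrm{sta}} \le \frac{\alpha^2}{\alpha-\beta}\Big(\frac{L_0}{\epsilon^2}+\frac{L_1}{\epsilon}\Big)\Delta .
\]

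Next I would apply Theorem~\ref{thm-failnum} at iteration $K$, which gives
\[
N_{\mathrm{fail}}^{(K)} \le \frac{\pi(1+\xi^2)\mu^2}{\xi(\mu-1)^2}\Big[\log\Big(\tfrac{1}{r_0}\sqrt{(\xi^{-2}+1)(L_0^2+L_1^2)}\Big)+N_{\mathrm{succ}}^{(K)}\log\tfrac1\tau\Big].
\]
The prefactor $\frac{\pi(1+\xi^2)\mu^2}{\xi(\mu-1)^2}$ is $\mathcal{O}(1/\xi)$. This holds because $\xi<1$ and because $\mu>1$ is a constant determined by $\alpha$ and $\beta$ alone, through $\mu\Phi(\mu/\alpha)=2\beta$. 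For the logarithm, $\sqrt{\xi^{-2}+1}\le\sqrt2/\xi$, so the term is $\mathcal{O}\big(\log(\sqrt{L_0^2+L_1^2}/(r_0\xi))\big)$ up to an additive constant. That constant is absorbed because $(L_0,L_1)\in O_{\xi,r_0}$ forces $\sqrt{L_0^2+L_1^2}\ge r_0$, so the argument of the logarithm is at least $1/\xi>1$ and the logarithm is bounded away from zero.

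Finally I would add the two bounds:
\[
N_{\mathrm{fail}}+N_{\epsilon,\mathrm{sta}} \lesssim \frac1\xi\log\Big(\frac{\sqrt{L_0^2+L_1^2}}{r_0\xi}\Big)+\Big(\frac{1}{\xi}\log\frac1\tau+1\Big)N_{\epsilon,\mathrm{sta}} .
\]
The coefficient $\frac1\xi\log\frac1\tau+1$ equals $\frac1\xi\big(\log\frac1\tau+\xi\big)$. Substituting the bound on $N_{\epsilon,\mathrm{sta}}$ then gives the stated order, with $\alpha^2/(\alpha-\beta)$ treated as a constant.

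The only delicate point is the counting argument in the first step: that the count of successful iterations "up to iteration $K$" is exactly what Theorem~\ref{thm-eps-comp} bounds. This requires noting that the descent of Lemma~\ref{lem-descent} telescopes only over successful steps, while failed steps change neither $f$ nor $g_k$. With that in place, everything else is constant bookkeeping.
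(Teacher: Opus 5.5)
Your proposal is correct and follows the paper's own route: apply Theorem~\ref{thm-failnum} at the first $\epsilon$-stationary iterate, bound $N_{\mathrm{succ}}$ there by Theorem~\ref{thm-eps-comp}, and use $\frac{1}{\xi}\log\frac{1}{\tau}+1=\frac{1}{\xi}\bigl(\log\frac{1}{\tau}+\xi\bigr)$; your extra bookkeeping of the constants is sound. One small refinement: absorbing the additive $\log\sqrt{2}$ needs the logarithm to be bounded below by a positive constant, which $1/\xi>1$ alone does not give uniformly, but the condition $\beta<\frac{1}{2\xi}\Phi\bigl(\frac{1}{\alpha\xi}\bigr)$ forces $\xi<1/\mu$ (as in the proof of Lemma~\ref{lem-dk-area}), so the logarithm is at least $\log\mu>0$.
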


As in the nonconvex setting, when $\tau = 1$, the bound becomes
\[
N_\mathrm{fail} + N_{\epsilon,\mathrm{sta}} \in \mathcal{O}\left( \frac{1}{\xi}
 \log \left(\frac{\sqrt{L_0^2 + L_1^2}}{r_0 \xi}\right) + \left(\frac{L_0}{\epsilon^2} + \frac{L_1}{\epsilon}\right) \Delta \right).
\]

\section{Numerical Experiments}\label{sec-experiments}

Through numerical experiments, we demonstrate the following properties of \hyperref[algo-ngm-ra]{NGM-RA}.
\begin{enumerate}
\def\labelenumi{(\roman{enumi})}

\item It performs comparably to methods with the same order of iteration complexity in convex optimization
(Subsection~\ref{sec-poisson}).
\item It adapts to the true $L_0,L_1$ and selects aggressive step sizes
(Subsection~\ref{sec-poisson}).
\item It finds stationary points efficiently and stably in nonconvex optimization
(Subsection~\ref{sec-phase}).

\end{enumerate}

\subsection{Poisson Regression}\label{sec-poisson}

In this subsection, we consider the following Poisson regression as a convex smooth optimization problem and show that \hyperref[algo-ngm-ra]{NGM-RA} performs comparably to algorithms specifically designed for convex optimization.

\begin{equation}\label{eq-poisson}
\min_{x \in \R^d,\, c\in \R} \frac{1}{K} \sum_{k=1}^K \left[e^{c + a_k^T x} - b_k (c + a_k^T x)\right] + \frac{\lambda}{2} \|x\|^2, \quad b_k \in \mathbb{Z}_{\geq0}, \, a_k \in \R^d.
\end{equation}

By Proposition~\ref{prp-poisson}, letting $\overline{b} \defeq \frac{1}{K} \sum_k b_k, \, A \defeq \max_k 1+\|a_k\|^2$, the above objective function is $(A \overline{b}+\lambda,A)$-smooth.

We evaluate the empirical behavior of \hyperref[algo-ngm-ra]{NGM-RA} on the hourly subset of the UCI Bike Sharing Dataset~\citep{fanaee2014},  which has $m = 17379$ hourly observations and $n = 49$ features.
We compare it with the following four methods.

\begin{itemize}

\item
  AdGD : Adaptive Gradient Descent
 ~\citep{malitsky2020adaptivegradientdescentdescent}
\item
  aGRAAL : Accelerated Golden Ratio Algorithm
 ~\citep{borodich2026nesterovgraal}
\item
  Clip.GD : Clip.GD with $(L_0,L_1)=(A^2 \overline{b}+\lambda,A^2)$
~\citep{zhang2020gradientclippingacceleratestraining}
\item
  GD+Armijo : Gradient Descent with the monotone linesearch scheme analyzed by \citet{hubler2024parameter}
\end{itemize}

Figures~\ref{fig:bikehour} compare the methods on the Bike Sharing dataset.
We perform 20 runs with random initializations and report their geometric mean.

\begin{figure}[htbp]
\centering
\includegraphics[width=0.6\textwidth]{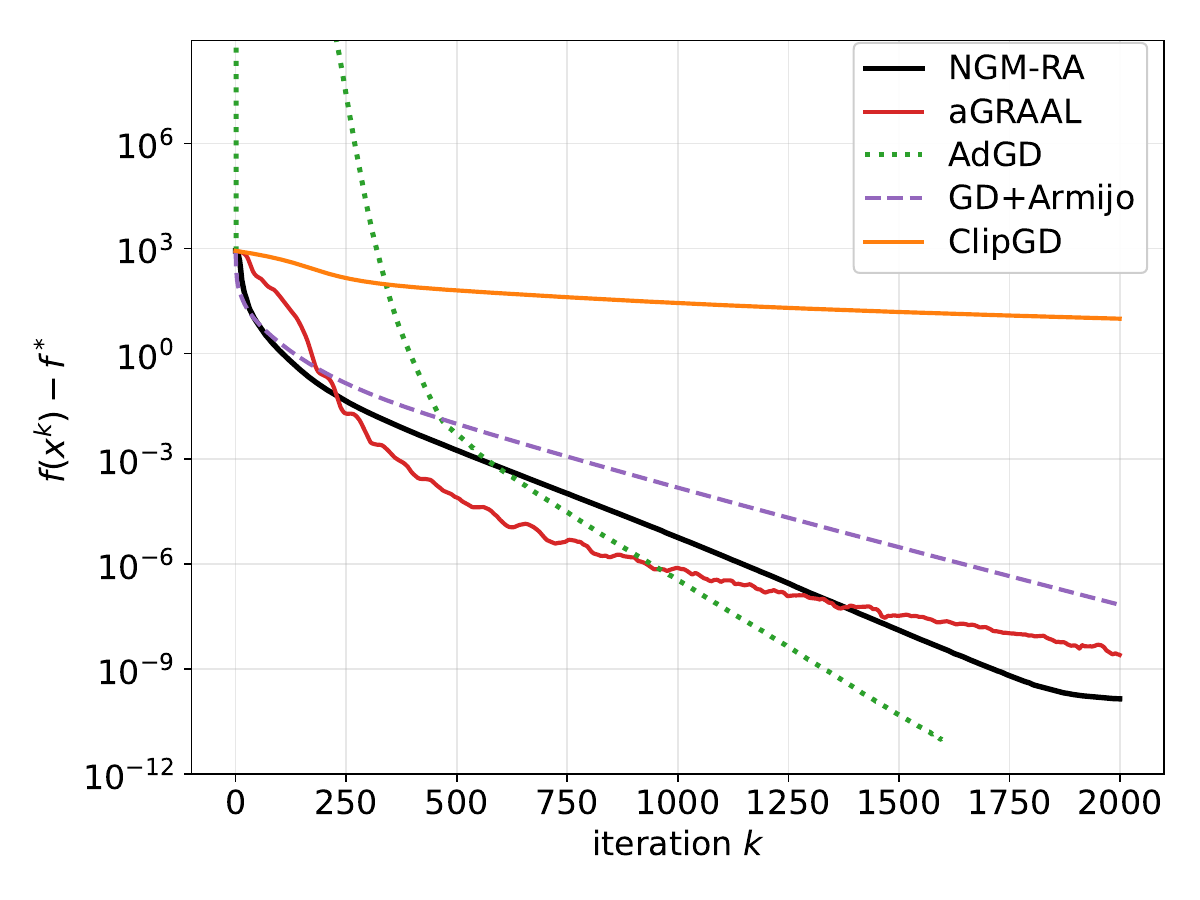}
\caption{Convergence of Each Method for Poisson Regression on BikeHour}
\label{fig:bikehour}
\end{figure}

The poor performance of Clip.GD is mainly due to the overly conservative bound of $L_0, L_1$. In contrast, estimating $(L_0,L_1)$ along the optimization trajectory allows \hyperref[algo-ngm-ra]{NGM-RA} to adapt to tighter values, which is a key advantage of our approach.
NGM-RA converges faster than the GD+Armijo method analyzed by \citet{hubler2024parameter}. While GD+Armijo monotonically increases its step size, NGM-RA maintains a two-dimensional estimated region for $(L_0,L_1)$ and can expand this region after successful iterations.
NGM-RA has a better iteration-complexity order than the bound established for AdGD by \citet{gorbunov2025methods}.
Nevertheless, AdGD exhibits faster convergence than NGM-RA in these experiments, while showing some instability during the early iterations.
As shown in Table~\ref{tbl-refs}, aGRAAL achieves the best dependence on $\epsilon$ among the considered methods for convex $(L_0,L_1)$-smooth problems in the small-$\epsilon$ regime. 
However, its empirical convergence is comparable to that of NGM-RA. 
One possible explanation is that the available iteration-complexity bound for aGRAAL has cubic dependence on $L_1$ and $R$, whereas that of NGM-RA has at most quadratic dependence on these quantities.
The observed performance difference 
therefore suggests that constant factors and the finite-accuracy regime play an important role in practice.

\subsection{Phase Retrieval}\label{sec-phase}

In this subsection, we consider phase retrieval~\citep{fannjiang2020numerics} as a nonconvex $(L_0,L_1)$-smooth optimization problem and demonstrate that \hyperref[algo-ngm-ra]{NGM-RA} can efficiently and stably find stationary points even in the nonconvex setting.
Phase retrieval aims to recover an unknown signal $z\in\mathbb{C}^n$ 
from intensity measurements $b_k=|a_k^\dagger z|^2$ associated with linear measurements $\{a_k\}_{k=1}^K\subset\mathbb{C}^n$.
Here, $a^\dagger$ denotes the conjugate transpose of $a\in\mathbb{C}^n$.
A wide range of problems in science and engineering can be formulated as phase retrieval, 
including X-ray crystallography~\citep{millane1990xray} and signal processing~\citep{ranieri2013phase}.

One common formulation of phase retrieval is the following least-squares problem.
\[
\min_{x \in \C^{n}} \sum_{k=1}^K \left[|a_{k}^\dagger x|^2   - b_k\right]^2.
\] 
To leverage the existing result that real phase retrieval is $(L_0,L_1)$-smooth, we restrict our attention to the real-valued setting, with $x\in\mathbb{R}^n$ and $a_k\in\mathbb{R}^n$, and consider the following real phase retrieval problem.

\begin{equation}\protect\phantomsection\label{eq-real-pr}{
\min_{x \in \R^{n}} \sum_{k=1}^K \left[(a_k^T x)^2 - b_k\right]^2.
}\end{equation}

As pointed out by \citet{bodard2026escaping}, when $\mathrm{span}\{a_k\}_k=\mathbb{R}^n$, the objective in Equation~\ref{eq-real-pr} is $(L_0,L_1)$-smooth but not $L$-smooth, since it is a quartic polynomial.
Furthermore, \citet{bolte2018first} showed that this objective satisfies relative smoothness with respect to the kernel $h(x)=\frac{1}{2}\|x\|^2+\frac{1}{4}\|x\|^4$.

In this subsection, we compare \hyperref[algo-ngm-ra]{NGM-RA} with the following four first-order methods that have convergence guarantees for Equation~\ref{eq-real-pr}.

\begin{itemize}

\item
  GD+Armijo (monotone) : Gradient Descent with the monotone Armijo line search analyzed by \citet{hubler2024parameter}
\item
  GD+Armijo (reset, $\eta_0=1$) : Gradient Descent that restarts the Armijo linesearch from the initial step size $\eta_0=1$ at every iteration
\item
  GD+Armijo (reset, $\eta_0=10$): Gradient Descent that restarts the Armijo linesearch from the initial step size $\eta_0=10$ at every iteration
\item
  AC-BPGM: the Auto-Conditioned Bregman Proximal Gradient Method of \citet{yagishita2025simplelsfree}, with kernel function $h(x)=\frac{1}{2}\|x\|^2+\frac{1}{4}\|x\|^4$ for the Bregman divergence
\end{itemize}

Figures~\ref{fig:phase-ret-iter} and~\ref{fig:phase-ret-orac} show the convergence of the gradient norm for each method as a function of the number of iterations and the number of calls to $f$ and $\nabla f$, respectively.
As in Subsection~\ref{sec-poisson}, we perform 20 runs with random initializations and report their geometric mean.

\begin{figure}[htbp]
  \centering
  \begin{minipage}[b]{0.48\textwidth}
    \centering
    \includegraphics[width=\textwidth]{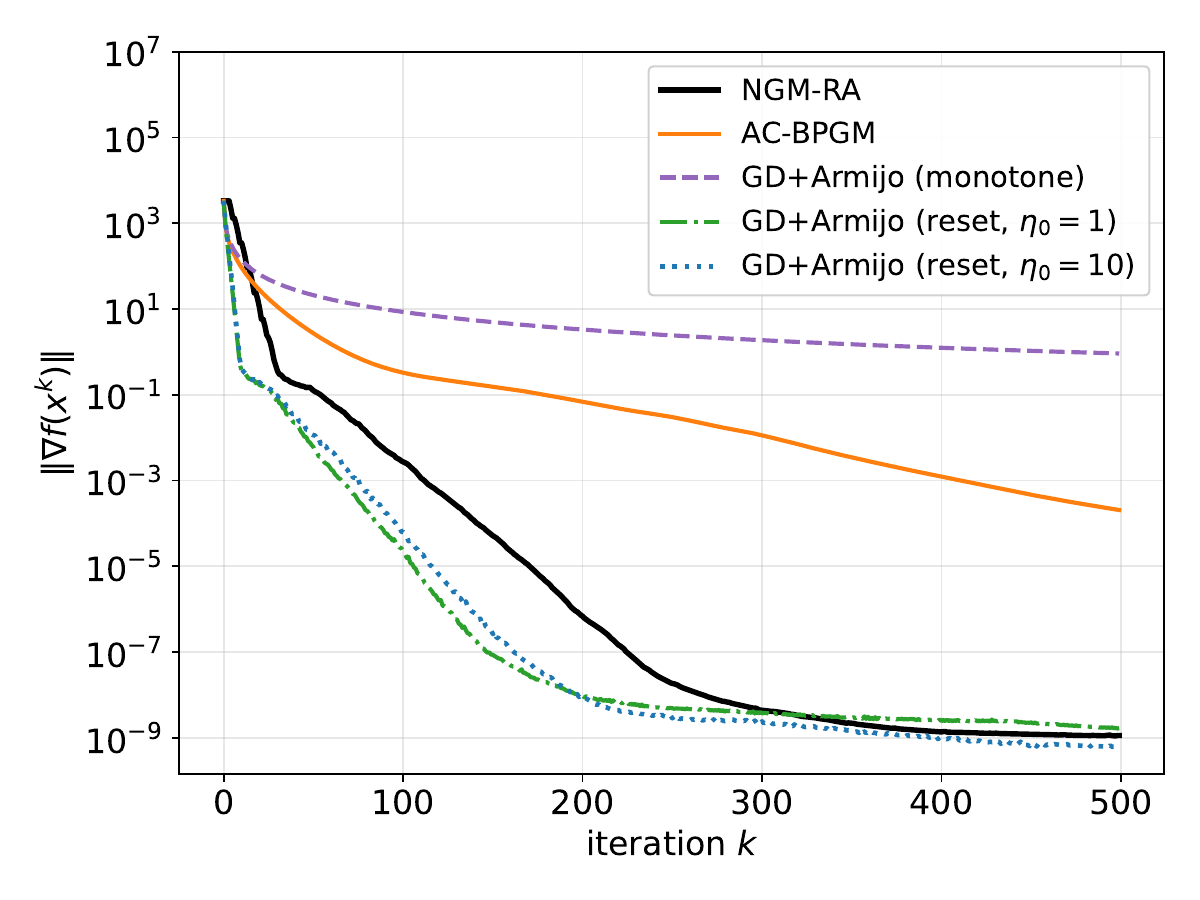}
    \caption{Convergence of Each Method versus the Number of Iterations for Real Phase Retrieval}
    \label{fig:phase-ret-iter}
  \end{minipage}
  \hfill
  \begin{minipage}[b]{0.48\textwidth}
    \centering
    \includegraphics[width=\textwidth]{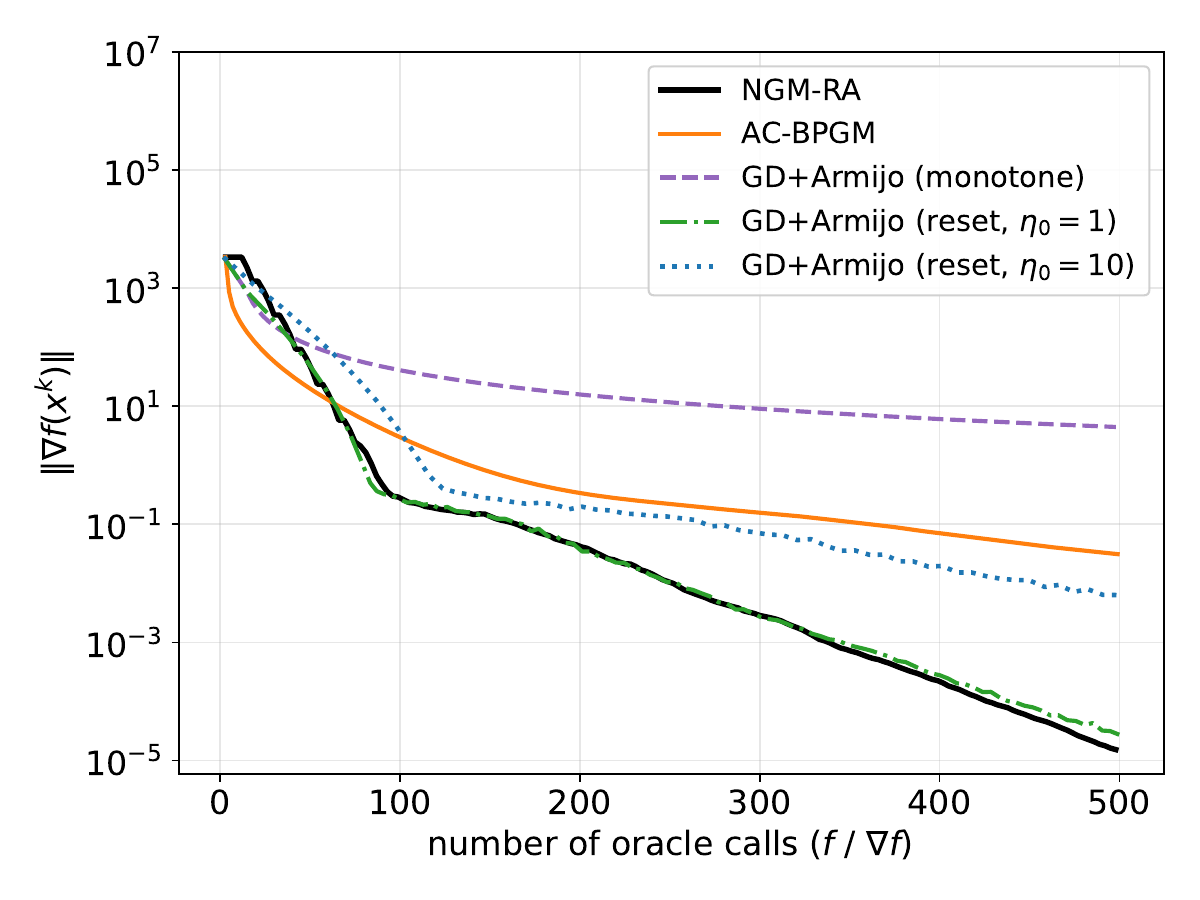}
    \caption{Convergence of Each Method versus the Number of Evaluations of $f$ and $\nabla f$ for Real Phase Retrieval}
    \label{fig:phase-ret-orac}
  \end{minipage}
\end{figure}

As in Subsection~\ref{sec-poisson}, NGM-RA converges faster than GD+Armijo with a monotone step-size update.
NGM-RA also appears to be more efficient than AC-BPGM, which we attribute to the following behavior.
When the update is small, 
the kernel function $h$ is effectively quadratic, 
so the curvature estimated by AC-BPGM through the Bregman divergence approaches the norm of the Hessian.
Moreover, since the curvature estimate in AC-BPGM increases monotonically, 
its step-size selection becomes more conservative than that of NGM-RA, which may explain the faster convergence of NGM-RA.
Turning to GD+Armijo with line search restarted from $\eta_0$ at every iteration, 
its convergence is comparable to that of NGM-RA when measured by the number of iterations.
The two methods also perform comparably in terms of the number of calls to $f$ and $\nabla f$ when $\eta_0=1$.
In contrast, GD+Armijo with $\eta_0=10$ requires more evaluations of $f$ than AC-BPGM, indicating substantially more backtracking steps.
This illustrates that the performance of line search can deteriorate when the initial step size $\eta_0$ is poorly chosen, while selecting a suitable $\eta_0$ may itself require knowledge of the problem parameters.

\section{Conclusion and Future Work}\label{conclusion-and-future-work}
In this work, we proposed \hyperref[algo-ngm-ra]{NGM-RA}, which jointly estimates the coupled parameters $(L_0,L_1)$ as a region and uses the estimated region for step-size selection.
For nonconvex $(L_0,L_1)$-smooth problems, NGM-RA finds an $\epsilon$-stationary point with an iteration complexity whose leading-order dependence on $\epsilon$ is $\mathcal{O}(L_0\Delta/\epsilon^2)$ (Subsection~\ref{sec-ncvx}),
while retaining the standard convergence guarantee in the convex setting (Subsection~\ref{sec-cvx}).
To the best of our knowledge, NGM-RA is the first method to provide these iteration-complexity guarantees simultaneously in the convex and nonconvex settings.
Our numerical experiments (Section~\ref{sec-experiments}) demonstrate the stability and efficiency gained by adaptively estimating $(L_0,L_1)$ and allowing the estimated region to evolve nonmonotonically.

On the other hand, NGM-RA has several limitations.
First, the algorithm involves rejection steps and does not allow temporary increases in the objective value.
This may cause the method to become trapped at undesirable stationary points when applied to neural network training.
Second, since NGM-RA is not accelerated, its iteration-complexity dependence on $(L_0,L_1)$ is worse than that of aGRAAL~\citep{borodich2026nesterovgraal}.
On the other hand, as observed in Section~\ref{sec-experiments}, aGRAAL appears to incur larger constant factors in practice. 
This motivates the development of an accelerated extension of NGM-RA that preserves its practical efficiency.

More broadly, the idea of adaptively estimating multiple coupled parameters as a region has the potential to extend beyond step-size selection to other optimization problems and algorithmic components.
We hope that this perspective will facilitate the development of parameter-free variants of a broader class of optimization methods and lead to practical algorithms with rigorous theoretical guarantees.

\section*{Acknowledgement}
Part of this work is supported by JST CREST JPMJCR26E3, JSPS KAKENHI JP25H01494, JP25K21158, JP26K02871, JP26K02989 and JP26K23861.

\section*{References}\label{references}
\addcontentsline{toc}{section}{References}

\renewcommand{\bibsection}{}
\bibliographystyle{plainnat}
\bibliography{ref}

\appendix

\section{Missing Proofs}\label{sec-add-proof}

\subsection{\texorpdfstring{Proofs for
Subsection~\ref{sec-prep}}{Proofs for Section~}}\label{sec-prep-proof}

\begin{tcolorbox}[mytheoremstyle]
\lemKkconsv*
\end{tcolorbox}

\begin{proof}
We prove the claim by induction.
For $k=0$, the claim follows immediately from $\tilde{\mathcal{T}}=\mathcal{T}\cap O_{\xi,r_0}\subset O_{\xi,r_0}=E_0$.

Now suppose that $\tilde{\mathcal{T}}\subset E_k$.
If iteration $k$ is successful, then $E_{k+1}\supset E_k\supset\tilde{\mathcal{T}}$, so the claim also holds at iteration $k+1$.
Suppose instead that iteration $k$ is unsuccessful. Take an arbitrary $(L_0,L_1)\in\tilde{\mathcal{T}}$. 
It suffices to show that $L_0+L_1g_k\geq s_k\gamma_k$.
By the definition of $\gamma_k$ and the induction hypothesis $\tilde{\mathcal{T}}\subset E_k$, we have $\gamma_k\leq L_0+L_1g_k$.
By the monotonicity of $\Phi$, Lemma~\ref{lem-upper}, and the definition of $s_k$, we obtain
\[
(L_0 + L_1 g_k) \Phi\left(\frac{L_0 + L_1 g_k}{\alpha \gamma_k}\right) \geq (L_0 + L_1 g_k) \Phi\left(\frac{L_1 g_k}{\alpha \gamma_k}\right) \geq M_k = s_k \gamma_k \Phi\left( \frac{s_k}{\alpha} \right).
\]

It follows that $L_0+L_1g_k\geq s_k\gamma_k$, since assuming $L_0+L_1g_k<s_k\gamma_k$ would lead to a contradiction with the monotonicity of $s\mapsto s\Phi(s/(\alpha\gamma_k))$.
\end{proof}

\begin{tcolorbox}[mytheoremstyle]
\lemdescent*
\end{tcolorbox}

\begin{proof}
Since $(L_0,L_1)\in E_k$ by Lemma~\ref{lem-Kk-consv}, the definition of $\gamma_k$ gives $\gamma_k\leq L_0+L_1g_k$.
Combining the definition of $M_k$ with the acceptance condition $M_k\leq2\beta\gamma_k$ yields the desired inequalities.
\[
\begin{aligned}
f(x^{k+1}) - f(x^k) 
&= - \frac{1}{\alpha \gamma_k} g_k^2 + \frac{1}{2 \alpha^2 \gamma_k^2} M_k g_k^2\\
&\leq - \frac{\alpha - \beta}{\alpha^2\gamma_k} g_k^2\\
&\leq - \frac{\alpha - \beta}{\alpha^2 (L_0 + L_1 g_k)} g_k^2.\\
\end{aligned}
\]
\end{proof}

\subsection{\texorpdfstring{Proofs for
Subsection~\ref{sec-num-fail}}{Proofs for Section~}}\label{sec-fail-proof}

\begin{lemma}[Region $G_k$ necessarily removed]\protect\hypertarget{lem-Dk-def-app}{}\label{lem-Dk-def-app}
Let $(l_{0,k},l_{1,k})\in E_k$ be a minimizer attaining $\gamma_k = \min_{(l_0,l_1)\in E_k} l_0+l_1g_k$. 
Suppose that iteration $k$ is unsuccessful, i.e., 
$M_k>2\beta\gamma_k$, and consider the region $G_k$ defined below, where $\mu$ is chosen to satisfy $\mu\Phi(\mu/\alpha)=2\beta$.
\[
G_k \defeq C_\xi \cap ([l_{0,k}, \mu l_{0,k}] \times [l_{1,k}, \mu l_{1,k}]).
\]
Then, $G_k\subset E_k\backslash E_{k+1}=F_{k+1}\backslash F_k$.
\end{lemma}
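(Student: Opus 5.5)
The proof has two parts: show $G_k\subset E_k$, and show that $G_k$ is disjoint from the new half-plane added at iteration $k$. The identity $E_k\setminus E_{k+1}=F_{k+1}\setminus F_k$ is bookkeeping. At an unsuccessful iteration $E_{k+1}=E_k\cap\{\cdots\}\subset E_k\subset O_{\xi,r_0}$. Hence $F_{k+1}\setminus F_k=(O_{\xi,r_0}\setminus E_{k+1})\setminus(O_{\xi,r_0}\setminus E_k)=E_k\setminus E_{k+1}$. (That $E_k\subset O_{\xi,r_0}$ is clear from both update rules.)

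\emph{Structural invariant.} By induction on $k$, I will show that $E_k$ has the form
\[
E_k = C_\xi\cap\bigcap_{j\in J_k}\{(l_0,l_1)\mid l_0+c_jl_1\ge d_j\}
\]
for some finite family with $c_j\ge 0$ and $d_j>0$.
\begin{itemize}
\item Base case: $E_0=O_{\xi,r_0}$ has this form with the single constraint $l_0+l_1\ge\sqrt2 r_0$.
\item Unsuccessful step: the cut $l_0+g_kl_1\ge s_k\gamma_k$ is of the same type, since $g_k\ge0$ and $s_k\gamma_k>0$.
\item Successful step: $C_\xi$ is a cone, so $\tau C_\xi=C_\xi$. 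Also $\tau\{l_0+c l_1\ge d\}=\{l_0+cl_1\ge\tau d\}$. Hence $\tau E_k$ has the same form, and intersecting with $O_{\xi,r_0}$ preserves it.
\end{itemize}
The invariant implies that $E_k$ is upward closed within $C_\xi$: if $(l_0,l_1)\in C_\xi$ dominates a point of $E_k$ componentwise, then it lies in $E_k$, because every constraint has nonnegative coefficients. The minimizer $(l_{0,k},l_{1,k})\in E_k$ is componentwise dominated by every point of $[l_{0,k},\mu l_{0,k}]\times[l_{1,k},\mu l_{1,k}]$, since $\mu>1$ and $l_{0,k},l_{1,k}\ge0$. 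Therefore $G_k\subset E_k$.

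\emph{Disjointness from $E_{k+1}$.} The failure condition gives $s_k\Phi(s_k/\alpha)=M_k/\gamma_k>2\beta=\mu\Phi(\mu/\alpha)$. The map $s\mapsto s\Phi(s/\alpha)$ is strictly increasing, so $s_k>\mu$. For $(l_0,l_1)\in G_k$ we then have
\[
l_0+l_1g_k\le \mu(l_{0,k}+l_{1,k}g_k)=\mu\gamma_k<s_k\gamma_k,
\]
which uses $\gamma_k>0$. Since $E_k\subset O_{\xi,r_0}$ and $O_{\xi,r_0}$ excludes the origin, $\gamma_k\ge\min_{O_{\xi,r_0}}(l_0+l_1g_k)>0$. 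So no point of $G_k$ satisfies the new cut, and $G_k\cap E_{k+1}=\emptyset$.

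The only step needing care is the invariant showing that $E_k$ is upward closed. The obstacle there is checking that the successful-step update $O_{\xi,r_0}\cap\tau E_k$ keeps every constraint monotone. This works because scaling a half-plane $\{l_0+cl_1\ge d\}$ only rescales its threshold $d$.
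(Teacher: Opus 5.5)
Your proof is correct and follows the paper's argument. Upward closure of $E_k$ within $C_\xi$ gives $G_k\subset E_k$, and $s_k>\mu$ together with $l_0+l_1g_k\le\mu\gamma_k$ on the rectangle rules out $E_{k+1}$ (this is the paper's ``upper-right vertex'' step). Your structural invariant — every cut of the form $l_0+c_jl_1\ge d_j$ with $c_j\ge 0$, preserved under scaling by $\tau$ — supplies the justification for upward closure that the paper only asserts. The $\mu>1$ you invoke without proof (the paper checks it via $\Phi(1/\alpha)<\Phi(1)=2(e-2)<2\le 2\beta$) is harmless, because your argument for this lemma never actually uses it.
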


\begin{proof}
First, it holds that $\mu>1$. Indeed, by the strict monotonicity of $s\mapsto s\Phi(s/\alpha)$ and the inequality
$1\cdot\Phi(1/\alpha)<\Phi(1)=2(e-2)<2\le 2\beta=\mu\Phi(\mu/\alpha)$,
we obtain $\mu>1$.

We next note that $(l_{0,k},l_{1,k})\in E_k$.
Moreover, any coordinatewise enlargement $(\phi_1l_{0,k},\phi_2l_{1,k})$ with $\phi_1,\phi_2\geq1$ remains in $E_k$ whenever it belongs to $C_\xi$.
Therefore, we have $G_k\subset E_k$.

It remains to show that $G_k\cap E_{k+1}=\emptyset$. 
Since every boundary segment of $E_k$, except for the boundary of $C_\xi$, has negative slope,
it suffices to show that the upper-right vertex of the rectangular region, $(\mu l_{0,k},\mu l_{1,k})$, does not belong to $E_{k+1}$.
To this end, We show below $\mu l_{0,k} + \mu l_{1,k} g_k < s_k \gamma_k$.

Since $\mu l_{0,k}+\mu l_{1,k}g_k=\mu\gamma_k$, it is sufficient to prove that $\mu<s_k$.
By the inequality $M_k>2\beta\gamma_k$, we obtain
\[
\mu \Phi\left( \frac{\mu}{\alpha} \right) = 2 \beta < \frac{M_k}
{\gamma_k} = s_k \Phi\left( \frac{s_k}{\alpha} \right).
\]
Since $s\mapsto s\Phi(s/\alpha)$ is strictly increasing, it follows that $\mu<s_k$.
\end{proof}

\begin{tcolorbox}[mytheoremstyle]
\lemdkarea*
\end{tcolorbox}

\begin{proof}
We define
\[
\kappa_\xi \defeq \min_{\theta \in S_\xi} \int_{C_\xi \cap [\theta_0, \mu \theta_0] \times [\theta_1, \mu \theta_1]} \mathrm{d} l_0 \mathrm{d} l_1, \quad 
\mathrm{where} \quad S_\xi \defeq \{\theta \in C_\xi \, |\, \|\theta\|=1\}.
\]
First, the condition $1/\xi\Phi(1/(\alpha\xi))>2\beta=\mu\Phi(\mu/\alpha)$ in Algorithm~\ref{algo-ngm-ra}, together with the strict monotonicity of $s\mapsto s\Phi(s)$, implies that $\mu<1/\xi$.
Consequently, at least one of the following two right triangles is contained in $C_\xi$: the triangle with vertices
$(\theta_0,\theta_1),(\mu\theta_0,\theta_1),(\mu\theta_0,\mu\theta_1)$,
or the triangle with vertices
$(\theta_0,\theta_1),(\theta_0,\mu\theta_1),(\mu\theta_0,\mu\theta_1)$.
Indeed, since $\mu>1$ and $(\theta_0, \theta_1) \in C_\xi$, we have $\mu\theta_0 \ge \xi \theta_1$ and $\mu \theta_1 \ge \xi \theta_0$. Therefore, if both $(\theta_0,\mu\theta_1)$ and $(\mu\theta_0,\theta_1)$ were outside $C_\xi$, then we would have
$\xi\mu\theta_1>\theta_0$ and $\xi\mu\theta_0>\theta_1$, which together imply $\xi\mu>1$, contradicting $\mu<1/\xi$.
The above geometric observation yields the following lower bound on the integral appearing in the definition of $\kappa_\xi$:
\[\int_{C_\xi \cap [\theta_0, \mu \theta_0] \times [\theta_1, \mu \theta_1]} \mathrm{d} l_0 \mathrm{d} l_1 \ge \frac{\theta_0 \theta_1(\mu-1)^2}{2}.
\]
Moreover, since $\|\theta\|=1$ and $\theta\in C_\xi$, we may write $\theta_0=\cos\varphi$ and $\theta_1=\sin\varphi$ with $\varphi\in[\tan^{-1}(\xi),\tan^{-1}(1/\xi)]$.
Hence $\theta_0\theta_1 = \sin(2 \varphi)/2$ is minimized at $\varphi = \tan^{-1}(\xi)$, that is, at
$(\theta_0,\theta_1)=\left(\frac{1}{\sqrt{1+\xi^2}},\frac{\xi}{\sqrt{1+\xi^2}}\right)$.
Therefore, we obtain
\[
\kappa_\xi \ge \frac{\xi (\mu-1)^2}{2(1+\xi^2)}.
\]
By the definition of $\nu$ and the construction of $G_k$, we obtain
\[
\nu(G_k) = \int_{G_k} \frac{\mathrm{d} l_0 \mathrm{d} l_1}{l_0^2 + l_1^2} \geq \frac{1}{\mu^2} \int_{G_k} \frac{\mathrm{d} l_0 \mathrm{d} l_1}{l_{0,k}^2 + l_{1,k}^2} = \frac{1}{\mu^2} \int_{\frac{1}{\sqrt{l_{0,k}^2 + l_{1,k}^2}}G_k} \mathrm{d} l_0 \mathrm{d} l_1 \geq \frac{\kappa_\xi}{\mu^2} \geq \frac{\xi (\mu-1)^2}{2(1+\xi^2)\mu^2}.
\]
Combining $G_k \subset F_{k+1}\backslash F_k$ from Lemma~\ref{lem-Dk-def-app} with the above inequality yields the result.
\end{proof}

\begin{tcolorbox}[mytheoremstyle]
\lemseval*
\end{tcolorbox}

\begin{proof}
Define $C'\defeq C_\xi \cap \left\{(l_0,l_1) \relmid \sqrt{2} \tau r_0 \leq l_0 + l_1 \leq \sqrt{2} r_0\right\}$. 
Then, we have
\[
\nu(F_{k}) = \nu(\tau F_{k}) \le \nu(F_{k+1} \sqcup C') = \nu(F_{k+1}) + \nu(C').
\] 
Moreover, the measure of $C'$ can be computed as follows.
\[
\nu(C') 
= \int_{\tan^{-1}(\xi)}^{\tan^{-1}(1/\xi)} \int_{\sqrt{2}\tau r_0/(\sin \phi + \cos \phi)}^{\sqrt{2}r_0/(\sin \phi + \cos \phi)} \frac{\mathrm{d} r}{r} \mathrm{d} \phi
= \int_{\tan^{-1}(\xi)}^{\tan^{-1}(1/\xi)} \log \frac{1}{\tau} \mathrm{d} \phi \leq \frac{\pi}{2} \log \frac{1}{\tau}.
\] 
Combining these observations yields the desired bound.
\end{proof}

\begin{tcolorbox}[mytheoremstyle]
\lemEarea*
\end{tcolorbox}

\begin{proof}

For any $\theta=(\theta_0,\theta_1)\in\mathcal{E}$, either $\theta_0\le L_0$ or $\theta_1\le L_1$. Since $\theta\in C_\xi$, these two cases imply $\theta_1\le L_0/\xi$ or $\theta_0\le L_1/\xi$, respectively. 
Hence,
\[
\|\theta\|^2 = \theta_0^2 + \theta_1^2\leq \max\left\{\frac{L_0^2}{\xi^2} + L_0^2, \frac{L_1^2}{\xi^2} + L_1^2 \right\} \leq \left(\frac{1}{\xi^2} + 1\right)(L_0^2 + L_1^2) =: c_\xi^2.
\]
Therefore, it holds that $\mathcal{E}\subset\left\{\theta\relmid r_0\le\|\theta\|\le c_\xi\right\}$, and hence we obtain
\[
\nu(\mathcal{E}) \le  \int_{\tan^{-1}(\xi)}^{\tan^{-1}(1/\xi)} \int_{r_0}^{c_\xi} \frac{\mathrm{d} r}{r} \mathrm{d} \phi \leq \frac{\pi}{2} \int_{r_0}^{c_\xi} \frac{\mathrm{d} r}{r} = \frac{\pi}{2} \log \left( \frac{c_\xi}{r_0}\right),
\]
which proves the claim.
\end{proof}

\subsection{\texorpdfstring{Proofs for
Subsection~\ref{sec-cvx}}{Proofs for Section~}}\label{sec-cvx-proof}

\begin{lemma}[Monotonicity of the distance to an optimal solution]\protect\hypertarget{lem-monotone-closeness-app}{}\label{lem-monotone-closeness-app}
The sequence $R_k$ is nonincreasing.

\end{lemma}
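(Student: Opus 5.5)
We need to show that $R_k \defeq \|x^k - x^*\|$ is nonincreasing under Assumptions~\ref{asm-basic}, \ref{asm-l0l1sm}, and \ref{asm-convex}. The argument splits on the type of iteration.

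At an unsuccessful iteration, $x^{k+1}=x^k$, so $R_{k+1}=R_k$ trivially. At a successful iteration, $x^{k+1}=x^k-\eta_k\nabla f(x^k)$ with $\eta_k = 1/(\alpha\gamma_k)$. Expanding the square gives
\[
R_{k+1}^2 = R_k^2 - 2\eta_k \nabla f(x^k)^T(x^k-x^*) + \eta_k^2 g_k^2 .
\]
By convexity, $\nabla f(x^k)^T(x^k-x^*) \ge f(x^k)-f^*$. It therefore suffices to show that
\[
\eta_k g_k^2 \le 2\,(f(x^k)-f^*).
\]

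To obtain this, I lower bound $f(x^k)-f^*$ by $f(x^k)-f(x^{k+1})$, using $f^*\le f(x^{k+1})$. Lemma~\ref{lem-descent} then gives
\[
f(x^k)-f(x^{k+1}) \ge \frac{\alpha-\beta}{\alpha^2\gamma_k} g_k^2 = \frac{\alpha-\beta}{\alpha}\,\eta_k g_k^2 .
\]
Hence
\[
2(f(x^k)-f^*) - \eta_k g_k^2 \ge \left(\frac{2(\alpha-\beta)}{\alpha}-1\right)\eta_k g_k^2 = \frac{\alpha-2\beta}{\alpha}\,\eta_k g_k^2 \ge 0,
\]
because the parameters satisfy $\alpha>2\beta$. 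This is exactly where the requirement $\alpha>2\beta$ enters.

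Substituting back into the expansion of $R_{k+1}^2$ yields $R_{k+1}^2\le R_k^2$. Combined with the trivial unsuccessful case, this establishes the monotonicity.

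I expect no real obstacle beyond this bookkeeping. The only subtle point is that the descent inequality must be applied to the accepted point $x^{k+1}=x'^{k+1}$, which is precisely the setting of Lemma~\ref{lem-descent}. Note also that $(L_0,L_1)$ plays no role here, so the bound needs only the first inequality of that lemma.
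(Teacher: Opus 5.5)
Your proof is correct and follows the paper's own argument. You expand $\|x^{k+1}-x^*\|^2$, apply convexity, use $f^*\le f(x^{k+1})$, invoke the first inequality of Lemma~\ref{lem-descent}, and use $\alpha>2\beta$ to reach $R_{k+1}^2\le R_k^2-\frac{\alpha-2\beta}{\alpha^3\gamma_k^2}g_k^2$; your explicit handling of the unsuccessful case ($x^{k+1}=x^k$) just makes explicit a step the paper leaves implicit.
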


\begin{proof}
For any successful iteration $k$, using convexity, $f(x^*)\leq f(x^{k+1})$, and the descent inequality in Lemma~\ref{lem-descent}, we obtain
\[
\begin{aligned}
R_{k+1}^2 &= \|x^{k+1} - x^*\|^2 = \|x^k-x^*\|^2 -\frac{2}{\alpha \gamma_k} \nabla f(x^k)^T(x^k-x^*) + \frac{g_k^2}{\alpha^2 \gamma_k^2}\\
&\leq R_k^2 + \frac{2}{\alpha \gamma_k} (f(x^*)-f(x^k)) + \frac{g_k^2}{\alpha^2 \gamma_k^2}\\
&\leq R_k^2 + \frac{2}{\alpha \gamma_k} (f(x^{k+1})-f(x^k)) + \frac{g_k^2}{\alpha^2 \gamma_k^2}\\
&\leq R_k^2 - \frac{2(\alpha - \beta) g_k^2}{\alpha^3 \gamma_k^2} + \frac{g_k^2}{\alpha^2 \gamma_k^2}
= R_k^2 - \frac{\alpha-2\beta}{\alpha^3 \gamma_k^2} g_k^2.
\end{aligned}
\]
Recall that Algorithm~\ref{algo-ngm-ra} assumes $\alpha>2\beta$.
\end{proof}

\begin{lemma}[Upper bound on $\Delta$]\protect\hypertarget{lem-f0-eval}{}\label{lem-f0-eval}
Suppose that Assumptions~\ref{asm-basic} and~\ref{asm-l0l1sm} hold. 
\[
\Delta \leq \frac{L_0 R^2}{2}e^{L_1 R}.
\]
\end{lemma}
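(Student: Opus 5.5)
We apply Lemma~\ref{lem-upper} with $x=x^*$ and $d=x^0-x^*$. Since the statement is used in the convex subsection, Assumption~\ref{asm-convex} is in force, so an optimal solution $x^*$ with $f(x^*)=f^*$ exists. First-order optimality then gives $\nabla f(x^*)=0$, and $R=\|x^0-x^*\|$. Lemma~\ref{lem-upper} yields
\[
M(x^*,x^0-x^*) = 2\frac{f(x^0)-f(x^*)}{R^2} \leq (L_0 + L_1\cdot 0)\,\Phi(L_1 R) = L_0\,\Phi(L_1R),
\]
and therefore $\Delta \le \tfrac{L_0R^2}{2}\Phi(L_1R)$. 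If $R=0$, both sides vanish and there is nothing to prove, so we assume $R>0$.

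It remains to bound $\Phi(s)\le e^s$ for $s\ge 0$. At $s=0$ this is $\Phi(0)=1$. For $s>0$, compare power series:
\[
\Phi(s)=2\sum_{n\ge0}\frac{s^n}{(n+2)!}.
\]
Each coefficient satisfies $2/(n+2)!\le 1/n!$, because $(n+2)(n+1)\ge 2$. Hence $\Phi(s)\le e^s$, and substituting gives $\Delta\le \tfrac{L_0R^2}{2}e^{L_1R}$.

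The only delicate point is that the statement lists only Assumptions~\ref{asm-basic} and~\ref{asm-l0l1sm}, while we need a minimizer $x^*$ with $\nabla f(x^*)=0$. This is supplied by the convex setting in which $R$ is defined via $x^*$. Alternatively, without convexity, it suffices that $x^*$ is any global minimizer, which is already implicit in the definition of $R$ and $\Delta$. The argument also holds for any $(L_0,L_1)\in\mathcal{T}$, so in particular for every $(L_0,L_1)\in\tilde{\mathcal{T}}$.
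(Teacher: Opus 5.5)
Your proof is correct and matches the paper's argument: apply Lemma~\ref{lem-upper} at $x=x^*$ with $d=x^0-x^*$, using $\nabla f(x^*)=0$, to get $\Delta\le \frac{L_0R^2}{2}\Phi(L_1R)$, then bound $\Phi(s)\le e^s$ by comparing the power series term by term. Your explicit justification of $\nabla f(x^*)=0$ and your treatment of the degenerate case $R=0$ fill in details that the paper leaves implicit.
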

\begin{proof}
By Lemma~\ref{lem-upper},
\[
f(x^0)-f(x^*)\leq \frac{L_0R^2}{2}\Phi(L_1R).
\]
Moreover,
\[
\Phi(s)
=2\sum_{l=0}^{\infty}\frac{s^l}{(l+2)!}
\leq \sum_{l=0}^{\infty}\frac{s^l}{l!}
=e^s.
\]
Combining these inequalities yields the claim.
\end{proof}

\begin{tcolorbox}[mytheoremstyle]
\thmconvoptv*
\end{tcolorbox}

\begin{proof}
By Lemma~\ref{lem-monotone-closeness-app}, $R_k$ is nonincreasing.
The remainder of the proof follows the proof of Theorem 3.2 in \citet{vankov2025optimizingl0l1smooth}.
For successful iteration $k$, by the descent inequality in Lemma~\ref{lem-descent}, the following holds for $\Delta_k\defeq f(x^k)-f^*$.

\begin{equation}\protect\phantomsection\label{eq-descent}{
\Delta_k - \Delta_{k+1} \geq \frac{\alpha - \beta}{\alpha^2} \frac{g_k^2}{L_0 + L_1 g_k} \fedeq \frac{\alpha - \beta}{\alpha^2} \psi(g_k).
}\end{equation} 
By convexity and the monotonicity of $R_k$, we further have
\[
\Delta_k \leq \nabla f(x^k)^T(x^k - x^*) \leq g_k R_k \leq g_k R.
\] 
Thus, $g_k\geq \Delta_k/R$. 
Since $\psi$ is increasing, Equation~\ref{eq-descent} can be bounded as follows.
\[
\begin{aligned}
\frac{\alpha - \beta}{\alpha^2} 
&\leq \frac{1}{\psi(g_k)} (\Delta_k - \Delta_{k+1}) \leq \frac{1}{\psi(\Delta_k/R)}(\Delta_k - \Delta_{k+1}) \\
&= \int_{\Delta_{k+1}}^{\Delta_{k}} \frac{1}{\psi(\Delta_k/R)} \d t 
\leq \int_{\Delta_{k+1}}^{\Delta_{k}} \frac{1}{\psi(t/R)} \d t\\
&= \int_{\Delta_{k+1}}^{\Delta_{k}} \frac{L_0 R^2}{t^2} + \frac{L_1 R}{t} \d t \\
&= L_0 R^2 \left( \frac{1}{\Delta_{k+1}} - \frac{1}{\Delta_k} \right) + L_1 R \log \frac{\Delta_{k}}{\Delta_{k+1}}.
\end{aligned}
\]

Summing this inequality over all successful iterations $k$ for which $\Delta_{k+1}\geq\epsilon$ yields the desired bound.
\[
(N_{\epsilon, \mathrm{opt}}-1) \frac{\alpha - \beta}{\alpha^2} \leq L_0 R^2 \frac{1}{\epsilon} + L_1 R \log \frac{\Delta}{\epsilon}.
\]

The second complexity bound follows from the upper bound on $\Delta$ established in Lemma~\ref{lem-f0-eval} below.

\[
\begin{aligned}
\frac{L_0 R^2}{\epsilon} + L_1 R \log \frac{f(x^0)-f^*}{\epsilon}
&\leq \frac{L_0 R^2}{\epsilon} + L_1 R \left(L_1 R + \log \frac{L_0 R^2}{\epsilon}\right)\\
&\leq \frac{L_0 R^2}{\epsilon} + (L_1 R)^2 + 2 L_1 R \left(\sqrt{\frac{L_0 R^2}{\epsilon}} + 1\right)\\
&\in O\left(\frac{L_0 R^2}{\epsilon} + (L_1 R)^2 + 2  \sqrt{\frac{L_0 R^2}{\epsilon} (L_1 R)^2} \right) \\
&\subset O\left(\frac{L_0 R^2}{\epsilon} + (L_1 R)^2\right). 
\end{aligned}
\] 
The final inclusion follows from the arithmetic-geometric mean inequality.
\end{proof}

\subsection{\texorpdfstring{Proofs for
Subsection~\ref{sec-ncvx}}{Proofs for Section~}}\label{sec-ncvx-proof}

\begin{tcolorbox}[mytheoremstyle]
\thmepscomp*
\end{tcolorbox}

\begin{proof}
Since $x \mapsto \frac{x^2}{L_0 +L_1 x}$ is nondecreasing for $x \ge0$, $g_k \geq \epsilon$ yields
\[
\frac{g_k^2}{L_0 + L_1 g_k} \ge \frac{\epsilon^2}{L_0 + L_1 \epsilon}
\] 

Therefore, summing the inequality in Lemma~\ref{lem-descent} over all successfull iteration $k$ such that $g_k\geq\epsilon$, gives
\[
f(x^0) - f^*   \geq N_{\epsilon, \mathrm{sta}} \frac{\alpha - \beta}{\alpha^2} \frac{\epsilon^2}{L_0 + L_1 \epsilon}.
\]

Rearranging this inequality yields the desired bound.
\[
N_{\epsilon, \mathrm{sta}} \leq \frac{\alpha^2}{\alpha - \beta} \left(\frac{L_0}{\epsilon^2} + \frac{L_1}{\epsilon}\right) (f(x^0) - f^*).
\]
\end{proof}

\subsection{\texorpdfstring{Proofs for
Section~\ref{sec-experiments}}{Proofs for Section~}}\label{proofs-for-sec-experiments}

We next show that the Poisson regression objective in Equation~\ref{eq-poisson} is $(L_0,L_1)$-smooth.

\begin{proposition}[$(L_0,L_1)$-smoothness of Poisson regression]\protect\hypertarget{prp-poisson}{}\label{prp-poisson}
Consider the following Poisson regression problem:
\begin{equation}\label{eq-poisson-app}
\min_{x \in \R^d,\, c\in \R} \frac{1}{K} \sum_{k=1}^K \left[e^{c + a_k^T x} - b_k (c + a_k^T x)\right] + \frac{\lambda}{2} \|x\|^2, \quad b_k \in \mathbb{Z}_{\geq0}, \, a_k \in \R^d.
\end{equation}

Let $f$ denote the objective function in Equation~\ref{eq-poisson-app}. Then,
 \[
\|\nabla^2 f(x,c)\| \leq  (A \overline{b} + \lambda) + A \|\nabla f(x,c)\|.
\] 
where $\overline{b}=\frac{1}{K}\sum_{k=1}^K b_k, \, A =1+\max_k \|a_k\|^2$

Thus, $f$ is $(A \overline{b} + \lambda, A)$-smooth.
\end{proposition}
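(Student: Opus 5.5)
The plan is to compute the Hessian explicitly and then bound its operator norm by a quantity that can be read off from a single coordinate of the gradient, namely the derivative with respect to the unregularized intercept $c$.

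\textbf{Setup and derivatives.} Write $z\defeq(x,c)\in\R^{d+1}$, $\tilde a_k\defeq(a_k,1)\in\R^{d+1}$ and $u_k\defeq c+a_k^Tx=\tilde a_k^Tz$. Then $\|\tilde a_k\|^2=1+\|a_k\|^2\le A$. Let $P\defeq\mathrm{diag}(I_d,0)$ denote the projection onto the $x$-block. Direct differentiation gives
\[
\nabla f(z)=\frac1K\sum_{k=1}^K\left(e^{u_k}-b_k\right)\tilde a_k+\lambda Pz,
\qquad
\nabla^2 f(z)=\frac1K\sum_{k=1}^K e^{u_k}\,\tilde a_k\tilde a_k^T+\lambda P .
\]

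\textbf{Bounding the Hessian.} Each term $e^{u_k}\tilde a_k\tilde a_k^T$ is positive semidefinite with operator norm $e^{u_k}\|\tilde a_k\|^2\le A e^{u_k}$, and $\|P\|=1$. The triangle inequality therefore gives
\[
\|\nabla^2 f(z)\|\le \frac{A}{K}\sum_{k=1}^K e^{u_k}+\lambda .
\]

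\textbf{Controlling the exponential average.} This is the only step that needs an idea, and I expect it to be the main obstacle. The point is that the regularizer does not act on $c$, so the last coordinate of the gradient is exactly
\[
\partial_c f(z)=\frac1K\sum_{k=1}^K\left(e^{u_k}-b_k\right)=\frac1K\sum_{k=1}^K e^{u_k}-\overline b .
\]
Rearranging and using that a single coordinate is bounded by the full norm,
\[
\frac1K\sum_{k=1}^K e^{u_k}=\overline b+\partial_c f(z)\le \overline b+\|\nabla f(z)\| .
\]

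\textbf{Conclusion.} Substituting this into the Hessian bound yields
\[
\|\nabla^2 f(z)\|\le (A\overline b+\lambda)+A\|\nabla f(z)\| ,
\]
which is exactly $(A\overline b+\lambda,A)$-smoothness in the sense of Definition~\ref{def-l0l1sm}. This holds for every $z$, including points where $\partial_c f(z)<0$, since the coordinate bound is used in that direction.
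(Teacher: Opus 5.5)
Your proof is correct and follows the paper's argument: you bound the Hessian norm by $\frac{A}{K}\sum_k e^{c+a_k^Tx}+\lambda$ using the triangle inequality, then control the exponential average through the unregularized intercept coordinate, $\|\nabla f(x,c)\|\ge|\partial_c f(x,c)|\ge\frac{1}{K}\sum_k e^{c+a_k^Tx}-\overline{b}$. Your explicit Hessian formula, written with $\tilde a_k=(a_k,1)$ and the projection $P$, simply writes out the Hessian computation that the paper states without derivation.
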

\begin{proof}
First, the norm of the gradient is bounded lower as follows.
\[
\|\nabla f(x,c)\| \geq \left|\frac{\partial f}{\partial c}(x,c)\right| = \left|\frac{1}{K} \sum_k \left[e^{c+a_k^T x} - b_k\right]\right| \geq \frac{1}{K} \sum_k e^{c+a_k^T x} - \overline{b}.
\]
Using this bound, we can upper-bound the Hessian norm as
\[
\|\nabla^2 f(x,c)\| \leq \frac{1}{K}\sum_k e^{c+a_k^T x} (1+\|a_k\|^2) + \lambda \leq \frac{A}{K} \sum_k e^{c+a_k^T x} + \lambda \leq A \|\nabla f(x,c)\| + A \overline{b} + \lambda.
\]
\end{proof}

\end{document}

%% file: preamble.tex
\usepackage{amsmath,amssymb,mathtools}
\usepackage{amsthm}
\usepackage{thmtools}

\usepackage{xcolor}
\usepackage{graphicx}

\usepackage{booktabs}
\usepackage{longtable}
\usepackage{array}
\usepackage{calc}

\usepackage{caption}
\usepackage{subcaption}
\usepackage{lmodern}

\usepackage{tcolorbox}

\usepackage{float}
\usepackage{algorithm}
\usepackage[
  noEnd=false,
  indLines=false,
  italicComments=false,
  rightComments=false,
  commentColor=black,
  beginComment=//~
]{algpseudocodex}

\floatname{algorithm}{Algorithm}
\usepackage[round,authoryear]{natbib}

\usepackage{xurl}

\usepackage[
  unicode,
  colorlinks=true,
  linkcolor=blue,
  citecolor=blue,
  urlcolor=blue
]{hyperref}

\usepackage{bookmark}

\usepackage{array}
\newcolumntype{L}[1]{>{\raggedright\arraybackslash}m{#1}}
\newcolumntype{C}[1]{>{\centering\arraybackslash}m{#1}}

\usepackage[affil-it]{authblk}

%% file: macros.tex
\newcommand{\R}{\mathbb{R}}
\newcommand{\C}{\mathbb{C}}

\newcommand{\defeq}{\coloneqq}
\newcommand{\fedeq}{\eqqcolon}

\newcommand{\relmid}{\!\mathrel{}\middle|\mathrel{}\!}

\renewcommand{\d}{\mathop{}\!\mathrm{d}}

%% file: theorems.tex
\theoremstyle{plain}

\newtheorem{assumption}{Assumption}[section]

\newtheorem{lemma}{Lemma}[section]
\newtheorem{proposition}{Proposition}[section]
\newtheorem{corollary}{Corollary}[section]

\theoremstyle{definition}

\newtheorem{definition}{Definition}[section]

\theoremstyle{remark}

\AtBeginDocument{%
}

\tcbset{
  mytheoremstyle/.style={
    colback=black!5!white,
    colframe=white,
    boxrule=0pt,
    sharp corners,
    before skip=8pt,
    after skip=8pt,
    left=5pt,
    right=5pt,
    top=3pt,
    bottom=3pt
  }
}

\tcolorboxenvironment{theorem}{
  mytheoremstyle
}

\tcolorboxenvironment{lemma}{
  mytheoremstyle
}

\tcolorboxenvironment{proposition}{
  mytheoremstyle
}

\tcolorboxenvironment{corollary}{
  mytheoremstyle
}

\tcolorboxenvironment{definition}{
  mytheoremstyle
}